\newtheorem{defi}{Definition}[section]
\newtheorem{theo}[defi]{Theorem}
\newtheorem{lemma}[defi]{Lemma}
\newtheorem{prop}[defi]{Proposition}
\newtheorem{coro}[defi]{Corollary}
\newtheorem{thmbis}{Theorem}
\theoremstyle{remark}
\newtheorem{rem}[defi]{Remark}
\newcommand{\calR}{\mathcal{R}}
\newcommand{\calW}{\mathcal{W}}
\newcommand{\calT}{\mathcal{T}}
\newcommand{\Area}{{\rm Area}}
\newcommand{\length}{{\rm length}}
\edef\@tempa#1#2{\def#1{\mathaccent\string"\noexpand\accentclass@#2 }}
\@tempa\rond{017}
\renewcommand{\phi}{\varphi} 
\newcommand{\m} {^{-1}}
\newcommand {\ra} {\rightarrow}
\renewcommand{\subsetneq}{\varsubsetneq}
\newcommand{\dunion}{\sqcup}% on peut utiliser \sqcup ou  \amalg
\newcommand{\disjoint}{\sqcup}
\newcommand{\ie} {i.~e.\ }
\newcommand {\cala} {{\mathcal {A}}}   
\newcommand {\calc} {{\mathcal {C}}}
\newcommand {\calg} {{\mathcal {G}}}
\newcommand {\calr} {{\mathcal {R}}}   
\newcommand {\cals} {{\mathcal {S}}}   
\newcommand {\calt} {{\mathcal {T}}}
\newcommand {\bbF} {{\mathbb {F}}}
\newcommand {\bbN} {{\mathbb {N}}}
\newcommand{\grp}[1]{\langle #1 \rangle}
\title{Presenting parabolic subgroups}
\author{Fran\c{c}ois Dahmani and Vincent Guirardel}
\date{}
\begin{document}

\maketitle

\begin{abstract}
Consider a relatively hyperbolic group $G$. We prove that if $G$ is finitely presented, so are its parabolic subgroups.
Moreover, a presentation of the parabolic subgroups can be found algorithmically  from
a presentation of $G$, a solution of its word problem,
and generating sets of the parabolic subgroups.
We also give an algorithm that finds parabolic subgroups in a given recursively enumerable class of groups.
\end{abstract}

Consider a relatively hyperbolic group $G$ with parabolic subgroups $H_1,\dots,H_n$.
It is well known that if each $H_i$ is finitely generated (or finitely presented), then so is $G$.
Osin showed conversely that if $G$ is finitely generated, then so are $H_1,\dots,H_n$ \cite[Prop. 2.27]{Osin_relatively}.
Whether finite presentation of $G$ implies finite presentation of $H_1,\dots, H_n$ is an important question 
raised by Osin in \cite[Problem 5.1]{Osin_relatively}.

On the algorithmic side, given a finite presentation of a relatively hyperbolic group $G$ and a generating set of the parabolic subgroups, 
can one find a presentation of the parabolic subgroups?

We give a positive  answer to these two questions. 

\begin{thmbis}\label{th_fp}
Let $G$ be a finitely presented group. Assume that $G$ is hyperbolic relative to $H_1,\dots,H_n$.
Then each $H_i$ is finitely presented.  
\end{thmbis}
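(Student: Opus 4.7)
Fix a finite presentation $\langle S\mid R\rangle$ of $G$. By Osin's theorem that parabolic subgroups of a finitely generated relatively hyperbolic group are finitely generated, each $H_i$ admits a finite generating set $T_i$, which after enlarging $S$ I may assume to be contained in $S$. To prove finite presentation of $H_i$, I aim to exhibit a constant $N$ such that every word $w$ in $T_i$ satisfying $w=_G 1$ lies in the normal closure in $F(T_i)$ of the finite set of $T_i$-words of length at most $N$ that represent $1$ in $G$; this would furnish a finite presentation of $H_i$.

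\textbf{Geometric input.} The main tool is the linear relative isoperimetric inequality for finitely presented relatively hyperbolic groups: there is a constant $K$ such that every word $w$ over $S\cup\bigcup_j(H_j\setminus\{1\})$ representing $1$ in $G$ admits a \emph{relative van Kampen diagram} whose ordinary $R$-cells number at most $K\,\abs{w}$, the remainder consisting of finitely many ``parabolic regions'' each of whose boundary label is a word in a single coset of some $H_j$. I would apply this to a word $w\in T_i^*$ with $w=_G 1$ and call the resulting diagram $D$.

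\textbf{Main reduction and obstacle.} The crux is to rewrite the boundary word of $D$ as a product of conjugates in $F(T_i)$ of bounded-length $T_i$-relators. Two operations are needed. First, parabolic regions labelled by a coset of $H_j$ with $j\neq i$, or by a non-trivial coset of $H_i$, must be absorbed; almost malnormality of the parabolic family forces such a region to attach to $\dron D$ along bounded arcs, so it can be swallowed into the ``hyperbolic'' part at the cost of finitely many additional $R$-cells. Second, the remaining parabolic regions, labelled by $H_i$ itself, must have uniformly bounded boundary length; here hyperbolicity of the coned-off Cayley graph supplies ``short chords'' splitting any long $H_i$-loop into shorter ones, which reduces boundary length at the cost of introducing new $H_i$-relators of bounded length. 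The principal obstacle is to organize these two operations coherently: parabolic regions may nest and interact, so reducing them in an arbitrary order risks an uncontrolled blow-up of $H_i$-boundary lengths. Arranging them into a tree-like hierarchy coming from the coned-off geometry, and performing the reductions from the innermost outward, should make the argument work and yield the constant $N$.
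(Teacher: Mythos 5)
The high-level idea — control parabolic regions in a relative van Kampen diagram and bound the lengths of the parabolic relators produced — is in the spirit of the paper, but as written the proposal has gaps at precisely the points where the paper's real work happens, and one step is wrong.

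The ``short chords'' step does not work as stated: in the coned-off Cayley graph each coset of $H_i$ is collapsed to a cone point, so every two vertices of a coset are at coned-off distance at most $2$. Coned-off hyperbolicity therefore gives no control whatsoever on word length with respect to $T_i$, and subdividing a long $H_i$-loop by coned-off geodesics can introduce $H_i$-elements of arbitrarily large $T_i$-length. This is exactly the difficulty the paper faces, and it resolves it not by chording geodesics but by working with the one-parameter family of \emph{truncated} relative presentations $X_\rho$ (where each auxiliary group $\Tilde H_i=\langle S_i\mid\calR_\rho(S_i)\rangle$ is presented by relators of length $\le\rho$), and by measuring diagrams with the complexity $\|\cdot\|_\infty$ which records the $S_i$-length of the parabolic letters. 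Proposition~\ref{prop;reduction_double} shows that failure of a linear isoperimetric inequality for $X_\rho$ is witnessed on diagrams of bounded area \emph{and bounded} $\|\cdot\|_\infty$, and Corollary~\ref{cor_lin_Xrho} bootstraps the isoperimetric inequality from $X_\infty$ to $X_\rho$ for $\rho$ large. That bootstrapping is the substitute for your tree-hierarchy/innermost-outward reduction, which you correctly flag as an obstacle but do not resolve.

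There is a second gap: even if you controlled the boundary lengths of all $H_i$-regions, a relative van Kampen diagram for a $T_i$-word $w$ lives over a presentation of $G$, with $R$-cells over $S\supsetneq T_i$ and parabolic regions of all types $H_j$. It does not directly express $w$ as a product of conjugates \emph{in $F(T_i)$} of short $T_i$-relators, which is what you need. The paper sidesteps this entirely: it never tries to rewrite $w$ inside $F(T_i)$. Instead it proves (Lemma~\ref{lem_lin2fp}) that once $X_\rho$ has a linear isoperimetric function, the map $p_i:\Tilde H_i\to H_i$ is injective, using a minimal-diagram argument in which the offending boundary edge must lie in a simply connected cluster made only of $\Tilde H_i$-cells; injectivity of $p_i$ then immediately gives a finite presentation of $H_i\cong\Tilde H_i$ by relators of length $\le\rho$. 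Your appeal to almost malnormality to ``absorb'' the $H_j$-regions, $j\ne i$, is also more of an intuition than an argument and plays no role in the paper's proof, which handles all parabolic types uniformly via clusters and the complexity bound.
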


\begin{thmbis}\label{th_algo1}
There exists an algorithm that takes as input a finite presentation of a group $G$, 
a solution to its word problem,  and a collection of finite subsets $S_1,\dots,S_n\subset G$, 
and that terminates if and only if $G$
is hyperbolic relative to $\grp{S_1},\dots,\grp{S_n}$.

In this case, the algorithm outputs a linear isoperimetry constant $K$ for the corresponding relative presentation, 
a finite presentation for each of the parabolic subgroups $\grp{S_i}$,
and says whether $G$ is properly relative hyperbolic relative to $\grp{S_1},\dots,\grp{S_n}$ (\ie $\grp{S_i}\subsetneq G$ for all $i$).
\end{thmbis}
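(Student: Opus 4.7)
My plan is to semi-decide the hypothesis ``$G$ is hyperbolic relative to $\grp{S_1},\dots,\grp{S_n}$'' by enumerating candidate finite relative presentations together with candidate linear isoperimetric constants, verifying each candidate with the help of the word-problem oracle, and then invoking the constructive content of Theorem~\ref{th_fp} to extract finite presentations of the parabolic subgroups.

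Concretely, I would dovetail over tuples $(X,\mathcal{R},K,L)$ augmented with auxiliary data: $X$ a finite set of words in the generators of $G$, $\mathcal{R}$ a finite set of words in $X^{\pm 1}\cup S_1^{\pm 1}\cup\cdots\cup S_n^{\pm 1}$, $K,L$ positive integers, and, for each generator $a$ of $G$, a candidate expression of $a$ over $X\cup S_1\cup\cdots\cup S_n$. Using the word problem I would check: (a) each candidate expression really represents the corresponding generator, so that $X$ together with the $S_i$'s generates $G$; (b) every element of $\mathcal{R}$ is trivial in $G$; and (c) for every word $w$ of length at most $L$ in the relative alphabet $X^{\pm 1}\cup S_i^{\pm 1}$ that is trivial in $G$, there is a combinatorial filling of relative area at most $K|w|$ whose $2$-cells carry either a relator from $\mathcal{R}$ or a word in $S_i^{\ast}$ that the oracle declares trivial in $G$. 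Each of (a), (b), (c) is decidable for the given candidate by bounded search plus oracle calls.

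The main obstacle is to know when condition (c) for a single finite $L$ already certifies linear relative isoperimetry at every length, and hence, by Osin's characterization, relative hyperbolicity of $G$ with respect to the $\grp{S_i}$. For this I would rely on a Rips-style local-to-global principle for relative Dehn functions: there is a computable threshold $L_0=L_0(K,\mathcal{R})$ with the property that if (c) holds for $L=L_0$, it holds for every larger $L$; a candidate tuple is declared successful only once $L\geq L_0$. Establishing (or pinning down in the literature) this finitary criterion is the technical heart of the argument.

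Once a successful tuple is found, the algorithm returns $K$ and invokes the effective form of Theorem~\ref{th_fp}, which from $(X,\mathcal{R},K)$ and the word-problem oracle produces an explicit finite presentation of each $\grp{S_i}$. For the properness flag, the relatively hyperbolic structure now in hand makes membership in each $\grp{S_i}$ decidable, so one tests whether every generator of $G$ lies in $\grp{S_i}$ and reports accordingly. For correctness of termination: if $G$ is relatively hyperbolic with the given parabolics, Osin's theorem yields a finite relative presentation with a linear relative isoperimetric inequality, so some $(X,\mathcal{R},K)$ is eventually enumerated and passes all checks once $L\geq L_0(K,\mathcal{R})$; conversely, if $G$ is not relatively hyperbolic with these parabolics, no tuple can pass (a)--(c) with $L\geq L_0$, again by Osin's characterization, and the algorithm runs forever, as required.
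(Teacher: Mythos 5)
Your proposal correctly identifies the general shape of the argument (semi-decide linear relative isoperimetry by bounded search, then extract presentations), but the step you describe as ``the technical heart'' --- a computable local-to-global threshold $L_0(K,\mathcal R)$ certifying linear relative isoperimetry from a finite check --- is not a black box you can ``pin down in the literature.'' It is essentially Proposition~\ref{prop;reduction_double}, and proving it is the content of Section~\ref{sec_diagrams}: one must analyse clusters of cells of parabolic type, show they are simply connected (Lemma~\ref{lem;sc_clusters}), decompose the diagram along arcs-of-clusters into a tree of pieces (Lemmas~\ref{lem;arranging_pieces}, \ref{lem;bound_pieces_arcs}), add chords and reglue, and control both the area and, crucially, the \emph{complexity} $\|D\|_\infty$ of the resulting diagrams. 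Without this, your condition (c) is not a finite check that globalizes; Papasoglu's theorem alone gives you a bound on the \emph{area} of a witness word, not on the sizes of the parabolic letters appearing in it.

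This points to a second, related gap: your relative alphabet $X^{\pm1}\cup\bigcup S_i^{\pm1}$ is finite, whereas Osin's criterion (Theorem~\ref{thm_osin}) is stated for the infinite alphabet $\hat S=S\sqcup H_1\sqcup\cdots\sqcup H_n$. You would have to justify that a linear bound in your finite-alphabet model is equivalent to Osin's condition, and also explain why your verification (c) is decidable at all --- the ``$2$-cells carrying a word in $S_i^\ast$ trivial in $G$'' can have arbitrarily long boundaries, so a bounded search needs an argument (precisely the cluster/complexity bound of the paper) to cap their size. The paper sidesteps the circularity you would otherwise face (``to bound the parabolic cells I need to know the parabolics are finitely presented, which is what I'm trying to prove'') by introducing the truncated presentations $X_\rho$, whose auxiliary groups $\Tilde H_i=\grp{S_i\mid\mathcal R_\rho(S_i)}$ use only relations of computably bounded length. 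Corollary~\ref{cor_lin_Xrho} and Lemma~\ref{lem_lin2fp} then turn the isoperimetry constant $K$ into an explicit relator-length bound $\rho(K)$, which is what makes Theorem~\ref{th_fp} effective; ``invoking the effective form of Theorem~\ref{th_fp}'' hides this entire mechanism. Two smaller remarks: enumerating candidate pairs $(X,\mathcal R)$ is unnecessary, since the relative presentation $X_\infty$ is determined by the input data and one only needs to search over $K$; and the properness test requires an explicit bound on the complexity of parabolic representatives of the given generators (Lemma~\ref{lem_proper}), not merely the abstract decidability of the membership problem.
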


In this statement, the linear isoperimetry constant $K$ is 
for the relative presentation $X_\infty$ as defined in Section \ref{sec_truncated}.

If one is not given generating sets of the parabolic subgroups, one can search for them,
and require that they lie in some recursively enumerable class of groups.

\begin{thmbis}\label{th_algo2}
 There exists an algorithm as follows. It takes as input a finite presentation of a group $G$, 
a solution for its word problem, 
and a recursive class of finitely presented groups $\calc$ (given by a Turing machine enumerating presentations of these groups).

It terminates if and only if $G$
is properly hyperbolic relative to subgroups that are in the class $\calc$.

In this case, the algorithm outputs an isoperimetry constant $K$, 
a generating set and a finite presentation for each of the parabolic subgroups.
\end{thmbis}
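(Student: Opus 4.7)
The plan is to design a dovetailed search algorithm that enumerates candidate parabolic data and verifies them using the algorithm from Theorem~\ref{th_algo1} together with a semi-algorithm checking that a subgroup $\grp{T}\le G$ is isomorphic to a group given by a presentation from $\calc$.

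First, enumerate all finite tuples $(T_1,\dots,T_n)$ of finite subsets of $G$ (elements represented as words in the given generators of $G$) and, in parallel, enumerate the presentations output by the Turing machine for $\calc$. For each tuple $(T_1,\dots,T_n)$ paired with candidate $\calc$-presentations $(P_1,\dots,P_n)=(\langle X_1\mid R_1\rangle,\dots,\langle X_n\mid R_n\rangle)$, launch two subroutines. Subroutine (i) runs the algorithm of Theorem~\ref{th_algo1} on input $(G,T_1,\dots,T_n)$; it halts if and only if $G$ is hyperbolic relative to the $\grp{T_i}$, and then outputs an isoperimetry constant, a finite presentation $\langle Y_i\mid S_i\rangle$ of each $\grp{T_i}$, and reports whether the structure is proper. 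Subroutine (ii) semi-verifies $\grp{T_i}\cong P_i$ for each $i$ by guessing a pair of homomorphisms $\phi_i\colon P_i\to\grp{T_i}$ (sending each generator in $X_i$ to a word in $T_i$) and $\psi_i\colon\grp{T_i}\to P_i$ (sending each generator in $Y_i$ to a word in $X_i$), and verifying four conditions: that $\phi_i$ kills the relators $R_i$ and that $\phi_i\circ\psi_i=\id$ on $Y_i$, both decidable via the word problem of $G$ applied inside $\grp{T_i}$; and that $\psi_i$ kills the relators $S_i$ and that $\psi_i\circ\phi_i=\id$ on $X_i$, both semi-decidable by enumerating consequences of $R_i$ in the free group on $X_i$. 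Halt as soon as some candidate passes both subroutines \emph{and} (i) reports proper relative hyperbolicity; then output the $T_i$, the presentations from (i), and the isoperimetry constant.

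For termination, if $G$ is properly hyperbolic relative to subgroups $H_1,\dots,H_n\in\calc$, then by Osin's theorem each $H_i$ is finitely generated, so a correct tuple $T_i$ eventually appears in the enumeration; by Theorem~\ref{th_fp} each $H_i$ is finitely presented, so subroutine (i) halts on this tuple; and since $\calc$ is recursively enumerable, presentations of the $H_i$ eventually appear, at which point subroutine (ii) terminates in finite time by witnessing the true isomorphisms. Conversely, if the algorithm halts, Theorem~\ref{th_algo1}'s certificate guarantees proper relative hyperbolicity of $G$ with respect to the $\grp{T_i}$, and the verified isomorphisms $\grp{T_i}\cong P_i$ place each parabolic in $\calc$.

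The main obstacle is the isomorphism check, since isomorphism of finitely presented groups is undecidable in general. The point is that we need only \emph{semi}-decidability, and the asymmetric availability of the word problem---solvable on the $G$-side via the given oracle, merely recursively enumerable on the $\calc$-side---is exactly what makes the search halt precisely in the favourable case. A minor subtlety is that parabolic subgroups are defined only up to conjugation in $G$, but this is harmless: the enumeration over all tuples $T_i$ automatically discovers some conjugate, and any conjugate of an $H_i\in\calc$ is isomorphic to $H_i$ and hence still matched by some presentation from $\calc$.
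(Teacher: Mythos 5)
Your proof is correct and follows the same overall structure as the paper's: dovetail over candidate tuples $(T_1,\dots,T_n)$, invoke Theorem~\ref{th_algo1} (Corollary \ref{cor_detect1}) to certify proper relative hyperbolicity and extract finite presentations of the $\grp{T_i}$, and then semi-decide membership of these groups in $\calc$. The only place where you genuinely diverge from the paper is in how you semi-decide the isomorphism $\grp{T_i}\cong P_i$. The paper does this by closing the enumeration of $\calc$-presentations under Tietze transformations and waiting until the very presentation returned by Corollary~\ref{cor_detect1} literally appears in the list. You instead enumerate candidate pairs of mutually inverse homomorphisms and verify the four obvious conditions, using the solvable word problem of $G$ for the two $\grp{T_i}$-side checks and the recursively enumerable word problem of $P_i$ for the other two. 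These are two standard and essentially interchangeable witnesses of the fact that isomorphism of finitely presented groups is semi-decidable; the Tietze route is terser, while your homomorphism route is more explicit about what data certifies the match and where the word-problem oracle for $G$ is used. Your remark about conjugacy is correct but not needed in an essential way, since the algorithm searches over all tuples of finite subsets anyway. One minor point of hygiene: you should make explicit that all three layers of enumeration (tuples, $\calc$-presentations, candidate homomorphism pairs) together with the semi-decidable checks must be dovetailed, so that a false candidate never blocks the discovery of a true one; you implicitly rely on this but do not quite say it.
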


The Turing machine enumerating $\calc$ is a machine that enumerates some finite presentations,
each of which represents a group in $\calc$, and such that every group in $\calc$ has at least one presentation
that is enumerated.

This paper can be seen as a continuation, extension, and precision, on the form and 
the substance  of \cite{Dah_finding}. 
It is based on the analysis of some Van Kampen diagrams in different \emph{truncated} relative presentations.
The main tool is Proposition \ref{prop;reduction_double} which says that if some relative presentation does not
satisfy a linear isoperimetric inequality, then this shows up on some diagram of small area and small complexity.

Section \ref{sec_context} recalls definitions about isometric inequalities, introduces truncated relative presentations,
and defines the complexity of a diagram.
Section \ref{sec_diagrams} contains the main technical results.
Section \ref{sec_conseq} is devoted to corollaries. Theorems \ref{th_fp}, \ref{th_algo1}, and \ref{th_algo2} 
follow from Corollaries \ref{cor_lin2fp}, \ref{cor_detect1} and \ref{cor_search}.

\section{Context}\label{sec_context}
\subsection{Linear isoperimetric inequalities}
Consider a finitely generated group $G$, with an arbitrary (non necessarily finite) generating set $S$. A \emph{presentation} of $G$ over $S$ 
is a set $\calR\subset \bbF_S$ that normally generates the kernel of the natural map from the free group $\bbF_S$ to $G$. 
The elements of $\calR$ are called \emph{defining relations}, and we usually write $G=\langle S|\calR \rangle$. 

We say that a presentation is \emph{triangular} if every defining relation has length $2$ or $3$ as word over the alphabet $S^\pm$. 
If one allows to increase the generating set, it is not restrictive to consider triangular presentations: from an arbitrary finite 
presentation, one can construct 
 effectively a triangular one. 

Consider $w\in \bbF_S$, viewed as  a reduced word over the alphabet $S^\pm$. 
If $w$ represents the trivial element of $G$ (we write $w\stackrel{G}{=} 1$), the \emph{area} of $w$ for the presentation $G= \langle S|\calR\rangle$, 
denoted by $\Area(w)$, is the minimal number $n$ such that  $w$ is the product in $\bbF_S$ of $n$ conjugates of elements of $\calR$.

Given a word  $w$ such that $w\stackrel{G}{=} 1$,
a \emph{Van Kampen diagram} for $w$ 
over the  presentation $G= \langle S|\calR\rangle$, is a simply connected planar 2-complex such that oriented edges are labeled by elements of $S^\pm$, 
such that reversing the orientation changes the label to its inverse,
and such that every $2$-cell has its boundary labeled by a cyclically reduced word conjugate to an element of $\calR\cup\calR\m$, 
and such that the boundary of the diagram itself is labeled by $w$. 
Sometimes, we just say \emph{cell} instead of $2$-cell.
It is well known that  $\Area(w)$ is the minimal number of $2$-cells of  Van Kampen diagrams for $w$.
See \cite[Section 5.1]{LyndonSchupp} for more details.

An \emph{isoperimetric function} of a presentation  $\grp{S|\calR}$  is a function $f:\mathbb{N} \to \mathbb{N}$ such that for all $w\in \bbF_S$,   
$\Area(w) \leq f(\length(w) )$.
Note that if $S$ is infinite,  
there are infinitely many words of a given length, and it may happen that no such function (with finite values) exists.

Our approach is based on the fact that a group is relatively hyperbolic if and only if
it has a presentation of a particular kind with a linear isoperimetric function \cite{Osin_relatively}, see Theorem \ref{thm_osin} below.
Another  important fact is that the failure of a specific linear isoperimetric inequality can be observed in a set of words of controlled area
(Gromov \cite{Gromov_hyperbolic}, Bowditch \cite{Bow_subquadratic}, Papasoglu \cite{Papasoglu_subquadratic}).

\begin{theo}[\cite{Papasoglu_subquadratic}]\label{theo;pap}
Let $G = \langle S | \calR\rangle $ be an arbitrary (non necessarily finite) triangular presentation of an arbitrary group. 

Assume that there is a word $w$ over the alphabet $S^\pm$ such that $w\stackrel{G}{=} 1$  and $\Area(w) > K \length(w)$. Then there exists a word $w'$    over the alphabet $S^\pm$  such that $w'\stackrel{G}{=}  1$, and such that
\begin{itemize}
 \item $\Area(w') \in [\frac{K}{2}, 240 K]$
 \item  $\Area(w') > \frac{1}{2\times 10^4} \length(w')^2$.
\end{itemize}
\end{theo}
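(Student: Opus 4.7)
This is Papasoglu's bisection theorem, and the sketch below is how I would reconstruct its proof. Fix a minimal-area Van Kampen diagram $D_{0}$ for $w$, so that $\Area(D_{0})=\Area(w)>K\length(w)$, and every sub-diagram obtained by cutting $D_{0}$ along a simple arc is itself of minimal area for its boundary word. The strategy is to iteratively cut the current diagram along a short simple arc, passing each time to a sub-diagram that still violates a linear isoperimetric inequality with a slightly relaxed constant, until the area falls inside the target window $[K/2,240K]$; density of the final diagram will then be automatic.

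The geometric core is a planar bisection lemma of the following shape: if $D\subseteq D_{0}$ is a minimal sub-diagram with $\Area(D)\leq \length(\partial D)^{2}/(2\cdot 10^{4})$, then there is a simple arc $\gamma$ in $D$ with endpoints on $\partial D$ such that $\gamma$ splits $D$ into two sub-diagrams each of area in $[\tfrac{1}{4}\Area(D),\tfrac{3}{4}\Area(D)]$, with $\length(\gamma)$ short compared to $\length(\partial D)$. One proves this by considering level sets of a suitable combinatorial distance function on $D$ and applying a coarea / pigeonhole argument to extract a level set of small length that bisects the area; the planarity of $D$ is what controls the number of components of such level sets and lets one turn a short level set into a single separating arc.

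Using the lemma, iterate from $D_{0}$. If the current diagram $D$ satisfies $\Area(D)\in[K/2,240K]$ and $\Area(D)>\length(\partial D)^{2}/(2\cdot 10^{4})$, stop and output the boundary word $w'$ of $D$. Otherwise density fails, the bisection lemma applies, and we pass to the half with the larger area-to-length ratio. This half is still ``$(K/2)$-bad'', that is $\Area>(K/2)\length$: for if both halves $D_{1},D_{2}$ violated this with $\partial D_{1},\partial D_{2}$ related by $\length(\partial D_{1})+\length(\partial D_{2})=\length(\partial D)+2\length(\gamma)$, summing would give $\Area(D)\leq (K/2)\length(\partial D)+K\length(\gamma)$, contradicting $K$-badness of $D$ once $\length(\gamma)$ is short enough, which is exactly what the bisection lemma supplies. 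Area drops by a factor bounded away from $1$ at each step, so the process terminates; the maintained $(K/2)$-badness together with the fact that a single bisection can drop the area by at most a bounded factor keeps the stopping area inside $[K/2,240K]$.

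The main obstacle is the bisection lemma with the explicit constants $\tfrac{1}{2\cdot 10^{4}}$ and the resulting factor $240$ in the window: the planar coarea argument is delicate and extracting the sharp constants requires a careful optimization over the choice of distance function and the bisecting level set. The remaining bookkeeping — propagation of the $K/2$-ratio through the iteration, termination, and the final check that density holds at the stopping diagram (a direct contrapositive of the bisection lemma) — is routine once the lemma is in hand.
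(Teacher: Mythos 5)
The paper does not prove this theorem; it is quoted from Papasoglu \cite{Papasoglu_subquadratic} (see also Bowditch \cite{Bow_subquadratic}), so there is no in-paper proof to compare against. Your sketch does reflect the broad shape of those arguments: a coarea-type bisection lemma for minimal Van Kampen diagrams over a triangular presentation, followed by a reduction to a diagram where the bisection is obstructed, whence density. But there is a real gap in the way you run the iteration. You establish that if $D$ is $K$-bad (meaning $\Area(D) > K\,\length(\partial D)$) and both halves of a bisection were $(K/2)$-good, then $\Area(D) \leq \frac{K}{2}\length(\partial D) + K\length(\gamma)$, which contradicts $K$-badness once $\length(\gamma) < \frac{1}{2}\length(\partial D)$; so one half is $(K/2)$-bad. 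From the second step on, however, your current diagram is only $(K/2)$-bad, and the same computation applied to a $(K/2)$-bad diagram yields only a $(K/4)$-bad half. The badness constant degrades geometrically through the iteration, and ``maintained $(K/2)$-badness'' is not what your propagation step actually gives. One can try to argue that the cumulative loss is controllable (the bisecting arcs have length $O(\sqrt{\Area})$ and the areas decay geometrically, so the relative losses are summable), but this needs to be tracked explicitly; as written the invariant simply doesn't close up. The cleaner route, and the one closer to Papasoglu's and Bowditch's actual proofs, is an extremal choice rather than an iteration: among all null-homotopic words $w'$ with $\Area(w') > \frac{K}{2}\length(w')$, take one of minimal area, and show that a minimal diagram for it cannot be split by a short arc into two $(K/2)$-good pieces. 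Minimality then does the work that your iteration tries to do, without any constant degradation.

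There is a second gap in the bookkeeping you defer: the stopping diagram in your iteration has no a priori reason to have area at most $240K$. The bisection lemma applies only when density fails, so the iteration halts at the first dense diagram; but nothing in your argument bounds the area of that diagram from above by $240K$ --- the area could still be enormous when density first holds (in fact, if density fails and $(K/2)$-badness holds simultaneously, one already gets $\length(\partial D) > 10^4 K$ and hence $\Area(D) > 5000 K^2$, so the transition is far above the target window). Getting the window $[\frac{K}{2}, 240K]$ requires a quantitative interplay between the bisecting-arc length, the boundary length, and the area that your sketch does not set up; it is part of the substance of the theorem, not just constant-chasing. The bisection lemma itself (area-bisecting arc of length $O(\sqrt{\Area})$ via a level-set/pigeonhole argument on a distance function in a minimal diagram) is a reasonable statement to aim for, and minimality of the diagram is indeed essential there, so that part of the plan is sound.
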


\subsection{Truncated and exact relative presentations}
\label{sec_truncated}

Since finite generation of a relatively hyperbolic group implies finite generation 
of its maximal parabolic subgroups \cite[Prop. 2.27]{Osin_relatively}, 
we always assume that relatively hyperbolic groups and their maximal parabolic subgroups are finitely generated.

Let  $G$ be a finitely presented group, and  $H_1, \dots, H_n$ be finitely generated subgroups of $G$. 
For each $i$, let  $S_i$ be a finite symmetric generating set of $H_i$. 
Consider a finite triangular presentation $G = \langle S | \calR \rangle $
where $S$ is a finite symmetric generating set of $G$ containing each $S_i$, 
and $\calR$ is a finite set of triangular relations over $S$.

To introduce \emph{truncated} relative presentations, we need auxiliary groups 
$\Tilde H_1,\dots \Tilde H_n$, with generating sets $\Tilde S_1,\dots,\Tilde S_n$,
and with epimorphisms $p_i:\Tilde H_i\ra H_i$ that map $\Tilde S_i$ bijectively to $S_i$.
Informally, $\Tilde H_i$ is a group obtained from a presentation of $H_i$ over $S_i$
by removing some relations.
\emph{Exact} relative presentations will correspond to the case where each $p_i$ is an isomorphism.

Let $\calT(\Tilde H_i)\subset \Tilde H_i^*$ be the multiplication table of $\Tilde H_i$,
\ie the set tuples of at most $3$ elements of $\bbF_{S_i}$ whose
product is trivial in $\Tilde H_i$. 
Thus, we have $(a,b,c)\in \calT(\Tilde H_i)$ if and only if $abc=1$ in $\Tilde H_i$.

Let $\hat S=S\disjoint \Tilde H_1\disjoint \dots \disjoint \Tilde H_n$.
To each element of $\hat S$ corresponds naturally an element of $G$ 
via the inclusion  $S\subset G$ or via $p_i$.
These elements of $G$ form a generating set, in general infinite. 

 Given the initial presentation $G=\grp{S|\calR}$, $H_i$ and its generating set $S_i$,
the auxiliary groups $\Tilde H_i$ and the epimorphisms $p_i:\Tilde H_i\ra H_i$,
we associate the  \emph{truncated relative presentation} of $G$ as follows:
\begin{equation}\label{eq;presentation} 
 G = \left\langle\  \hat S\ \big|\ \calR',\ (\calT(\Tilde H_i))_{i=1\dots n} \ \right\rangle
\end{equation}
where $\calr'$ consists of $\calr$ together with all two-letter relators of the form
$\tilde s\m p_i(\tilde s)$ for $\tilde s\in\Tilde S_i$, ($p_i(\tilde s)$ being an element of $S$).
Obviously, this infinite presentation is indeed a triangular presentation of $G$.

We say that this presentation is truncated because only the multiplication table of $\Tilde H_i$ is included,
and not the one of $H_i$ (although all relations of $H_i$ are consequences of $\calr'$).
We say that a truncated relative presentation as above is \emph{exact} if
for all $i$, $p_i:\Tilde H_i\ra H_i$ is an isomorphism.

We will be particularly interested in the following  one-parameter family of truncated relative presentations $X_\rho$.
Given $G,H_i,S_i$ as above, and $\rho\in \bbN\cup\{\infty\}$,
we define $\calr_\rho(S_i)$ be the set of all words of length $\leq \rho$ on $S_i$ that are trivial in $H_i$,
$\Tilde H_i=\grp{S_i | \calr_\rho(S_i)}$, and $p_i:\Tilde H_i\ra H_i$ the obvious epimorphism.
We define $X_\rho$ the truncated relative presentation (\ref{eq;presentation}) corresponding to this data.
In particular, $X_\infty$ is an exact relative presentation, and if all $H_i$ are finitely presented,
then $X_\rho$ and $X_\infty$ coincide (as presentations) for $\rho$ large enough.

\begin{theo}[{\cite[Th. 1.7, Def. 2.29]{Osin_relatively}}]\label{thm_osin}
$G$ is hyperbolic relative to $H_1, \dots, H_n$ if and only if the exact presentation $X_\infty$
satisfies a linear isoperimetric inequality.
\end{theo}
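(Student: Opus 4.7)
The plan is to recognize $X_\infty$ as essentially Osin's relative presentation from \cite[Def.~2.29]{Osin_relatively}, and then cite his characterization \cite[Th.~1.7]{Osin_relatively} of relative hyperbolicity by the existence of a linear relative isoperimetric inequality. Nothing more should really be needed; the work is all in checking that the bookkeeping is benign.

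First I would unpack $X_\infty$ explicitly. Taking $\rho=\infty$ makes each $\Tilde H_i=\grp{S_i\mid \calR_\infty(S_i)}$ equal to $H_i$, so $p_i$ is an isomorphism and $\calT(\Tilde H_i)=\calT(H_i)$ is the full multiplication table. The generating set is the disjoint union $\hat S=S\sqcup H_1\sqcup\dots\sqcup H_n$, and the relators are: the finite triangular relators $\calR$ of $G$ over $S$; the two-letter identifications $\tilde s\m p_i(\tilde s)$ gluing the copy of $\Tilde S_i$ inside $H_i$ to the copy of $S_i\subset S$; and the triangular multiplication tables $\calT(H_i)$. This is precisely Osin's datum of a finite relative presentation with respect to $\{H_1,\dots,H_n\}$, with the sole cosmetic difference that Osin's relative generating set is usually taken to be $S$ modulo the parabolics rather than containing the $S_i$, so $\hat S$ has some redundant generators.

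Next I would argue that the relative Dehn function of Osin's presentation and the (ordinary) Dehn function of $X_\infty$ agree up to a multiplicative constant. On the one hand, a Van Kampen diagram for $X_\infty$ contains some cells from $\calR\cup\{\tilde s\m p_i(\tilde s)\}$ (bounded by a constant times $\length(w)$ for boundary words over $\hat S$) and some cells from $\bigcup_i\calT(H_i)$; these count exactly as the cells in Osin's framework, so the areas match up to the constantly many identification cells introduced by the disjoint copies of $S_i$. Conversely, any filling in Osin's sense lifts to a filling in $X_\infty$ by inserting identification cells along each boundary letter in $S_i$, again at the cost of a linear factor. Thus $X_\infty$ has a linear isoperimetric inequality if and only if Osin's presentation does.

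The only step that needs real care, and hence the main obstacle, is this translation: one must verify that the two-letter identifications $\tilde s\m p_i(\tilde s)$ and the doubling of generators in $\hat S$ do not create any super-linear overhead, and that the triangularity convention of the paper matches Osin's (non-necessarily triangular) conventions in both directions. Once that is done, combining the equivalence above with \cite[Th.~1.7]{Osin_relatively} yields both implications of the statement.
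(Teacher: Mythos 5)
Your approach is exactly the paper's: this theorem is a citation to Osin, the paper gives no proof beyond the surrounding \texttt{Remark} that Osin's full-length multiplication table is harmless, and your plan likewise reduces the statement to \cite[Th.~1.7]{Osin_relatively} after checking that the presentation $X_\infty$ differs from Osin's relative presentation only in innocuous bookkeeping. You correctly flag the right points of friction: the redundant copies of the $S_i$ inside $\hat S$, the identification relators $\tilde s^{-1}p_i(\tilde s)$, and the triangularity restriction on $\calT(H_i)$.

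One sentence in the middle step deserves a correction, however. You write that the cells of $\bigcup_i\calT(H_i)$ ``count exactly as the cells in Osin's framework, so the areas match,'' but Osin's \emph{relative} Dehn function counts only the $\calR$-cells and not the multiplication-table cells, so a diagram with bounded relative area can a priori have many more cells once you impose triangular table cells. The claim that linearity of $\delta^{\mathrm{rel}}$ implies linearity of the ordinary isoperimetric function of $X_\infty$ is still true, but it needs the extra (short) argument: in a diagram $D$ realizing the relative area, the total length of cluster boundaries is bounded by $3\cdot(\text{number of }\calR\text{-cells})+\length(\partial D)=O(n)$, so standardly triangulating each cluster costs $O(n)$ triangles in total, and $\Area(D)=O(n)$. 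This is the same mechanism the paper itself uses later (Remark~\ref{rem;std} and Lemma~\ref{lem_std}), so your plan is not off-track --- it just needs this step stated rather than waved away.
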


The subgroups $H_1,\dots,H_n$ of $G$ are called the \emph{maximal parabolic subgroups}.
Since there is no risk of confusion, we will simply call them \emph{parabolic subgroups}.

\begin{rem}
 Osin includes all words of any length in the multiplication table.
One easily checks that this does not change the result.
\end{rem}

In section \ref{sec_conseq},
we are going to prove that if $X_\infty$ satisfies a linear isoperimetric inequality, so does $X_\rho$
for $\rho$ large enough. This will easily imply that parabolic subgroups are finitely presented.

\subsection{Complexities}

Since $X_\rho$ is an infinite presentation, it is convenient to have a measure of complexity for letters and words on $\hat S$.
Recall that $\hat S=S\disjoint \Tilde H_1\disjoint \dots \disjoint \Tilde H_n$.
For $a\in \Tilde H_i$,
we denote by $|\Tilde a|_{\Tilde S_i}$ the word length of $a$ relative to the generating set $\Tilde S_i$.
We define the \emph{complexity} $\|a\|$ of $a\in \Hat S$ as $1$ if $a\in S$, and as $|a|_{\Tilde S_i}$ if $a\in \Tilde H_i$.

Given a word $w=a_1\cdots a_n$ over $\hat S$, we define 
\begin{itemize}
\item $\length(w)=n$
\item $\|w\|_1=\sum_{i=1}^n \|a_i\|$
\item $\|w\|_\infty=\max_{i=1}^n \|a_i\|$
\end{itemize}
Note that if $w$ is a one-letter word, then $||w||_1=||w||_\infty=||w||$.

Similarly, if $D$ is a diagram (or a path) whose edges are labeled by elements of $\Hat S$,
we define $\|D\|_1$ and $\|D\|_\infty$ as the sum and the maximum of the complexities of the labels of its edges.
For a labeled path $p$, $\length(p)$ denotes its number of edges,
and $\Area(D)$ denotes the number of $2$-cells of a diagram $D$.

\section{Diagrams}\label{sec_diagrams}

The goal of this section is 
to prove that if $X_\rho$ does not satisfy a linear isoperimetric inequality,
this shows up on diagrams of small area and small complexity (Proposition \ref{prop;reduction_double}).

\subsection{Vocabulary}
\newcommand{\thick}{{\mathrm{thick}}}
\newcommand{\Dthick}{D_{\mathrm{thick}}}

\paragraph{Thickness.}
Let $D$ be a Van Kampen diagram over the presentation $X_\rho$ ($\rho$ being fixed in $\bbN\cup\{\infty\}$). 
We denote by $\Dthick\subset D$ the union of all $2$-cells, and of all vertices and edges that
are contained in the boundary of a $2$-cell.
We say that $D$ is \emph{thick} if $D=\Dthick$ \ie if every edge lies in the boundary of a $2$-cell.

\paragraph{Clusters.}
We define cells of type $\calr'$ (resp. of type $\Tilde H_i$) as those labeled by a word of $\calr'$
(resp. by a word in $\calt_{S_i}(\Tilde H_i)$).
Note that two cells of type $\Tilde H_i$ and $\Tilde H_j$ cannot share an edge if $i\neq j$.

Two cells of the same type $\Tilde H_i$ 
and sharing an edge are said \emph{cluster-adjacent}.
A \emph{cluster} is an equivalence class for the  transitive closure of this relation.
All $2$-cells of a cluster have the same type $\Tilde H_i$, which we define as the type of the cluster.
We identify a cluster with the closure $C$ of the $2$-cells it is made of.
Note that clusters are contained in $\Dthick$.

If $C$ is a cluster, we denote by $\partial C$ (its boundary) the  union of closed edges of $C$ that are in only one $2$-cell of $C$.

\begin{rem}\label{rem;R'}
Note that for any cluster $C$, any edge in $\partial C\setminus \partial D$
has complexity $1$. Indeed, the $2$-cell of $D\setminus C$ containing this edge is labeled by a relator 
$\Tilde s\m p_i(\Tilde s)$ for some $\Tilde s\in \Tilde S_i$.
\end{rem}

\subsection{Simply connected clusters, standard filling}

\begin{figure}[htbp]
 \centering
 \includegraphics{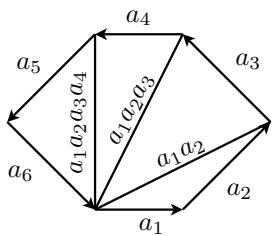}
 \caption{Standard filling.}
 \label{fig_standard}
\end{figure}

Note that a cluster $C$ (as a subset of the plane) is simply connected 
if and only if $C$ is a disk and $\partial C$ is an embedded circle in the plane.
We will mostly deal with diagrams whose clusters are simply connected.

Consider a simply connected cluster $C$, with $\partial C$ labeled by the cyclic word $a_1, \dots, a_n$ (where each $a_j\in \Tilde H_i$).
A \emph{standard filling} of $\partial C$ is a diagram with boundary $\partial C$,
with $n-2$ triangles as in figure \ref{fig_standard},
all whose vertices are in $\partial C$,
and whose interior edges are labeled by  $a_1\dots a_j$ for $j\leq n-2$, where $a_1\dots a_j$ 
is viewed as an element of $\Tilde H_i$.

\begin{lemma}\label{lem_std}
If $C$ is an arbitrary simply connected cluster, then $\|\partial C\|_1 \leq 3\Area(D) + \| \partial D\|_1$,  

If $C$ is standardly filled, then $\Area(C)=\length(\partial C)-2$,
and $\|C\|_\infty\leq \|\partial C\|_1$.
\end{lemma}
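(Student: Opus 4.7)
The plan is to treat the two parts separately; both reduce to elementary bookkeeping, and the cluster's topology plays essentially no role beyond providing a bona fide boundary cycle.

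For the first inequality, I would split the edges of $\partial C$ into two classes: those lying on $\partial D$ and those in the interior of $D$. The boundary edges contribute at most $\|\partial D\|_1$ to $\|\partial C\|_1$ because each one occurs (at least) once in $\partial D$ as well. For the interior edges, Remark \ref{rem;R'} tells us that every such edge has complexity exactly $1$, since the $2$-cell of $D\setminus C$ lying on its other side must be one of the two-letter relators $\tilde s\m p_i(\tilde s)$. It remains to bound the number of interior edges in $\partial C$. Since $D$ is planar, each interior edge of $\partial C$ is shared with a unique $2$-cell of $D\setminus C$; because the presentation $X_\rho$ is triangular, each such cell has at most $3$ edges on its boundary, and there are at most $\Area(D)$ such cells. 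Hence the number of interior edges of $\partial C$ is at most $3\Area(D)$, and summing gives $\|\partial C\|_1 \leq 3\Area(D) + \|\partial D\|_1$.

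For the second claim, write $n=\length(\partial C)$. The standard filling is a triangulation of a topological $n$-gon whose only vertices are the $n$ vertices of $\partial C$; any such triangulation has exactly $n-2$ triangles (for instance by Euler's formula or a direct induction), giving $\Area(C)=n-2$.

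Finally, for $\|C\|_\infty \leq \|\partial C\|_1$, I would run through the edges of the standardly filled $C$ one class at a time. Each boundary edge is one of the $a_k\in\tilde H_i$, so its complexity $\|a_k\|$ is trivially bounded by $\|\partial C\|_1 = \sum_k\|a_k\|$. Each interior edge of the standard filling is, by construction, labeled by a partial product $a_1\cdots a_j$ viewed as an element of $\tilde H_i$; its complexity is the word length $|a_1\cdots a_j|_{\tilde S_i}$, which by subadditivity of word length with respect to the generating set $\tilde S_i$ is at most $\sum_{k=1}^{j}|a_k|_{\tilde S_i}=\sum_{k=1}^{j}\|a_k\| \leq \|\partial C\|_1$. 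Taking the maximum over all edges yields the stated bound.

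The argument has no serious obstacle; the only delicate point is the appeal to Remark \ref{rem;R'} in the first part, which forces the interior edges of $\partial C$ to have complexity $1$ and makes the counting by $3\Area(D)$ effective. Everything else is the standard observation that triangulations of an $n$-gon use $n-2$ triangles together with subadditivity of word length in $\tilde H_i$.
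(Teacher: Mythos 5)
Your proposal is correct and follows essentially the same route as the paper's proof: partition $\partial C$ into boundary and interior edges, invoke Remark~\ref{rem;R'} for the interior edges, and count them via the triangular 2-cells and $\Area(D)$. The paper's version is terser (it bounds the inner-edge count by $\length(\partial C)\leq 3\Area(D)$ from cells inside $C$ rather than from cells of $D\setminus C$, and dismisses the second assertion as ``clear from the definition''), but the content is the same; your spelled-out subadditivity argument for $\|C\|_\infty\leq\|\partial C\|_1$ and the $n$-gon triangulation count are exactly what ``clear from the definition'' unpacks to.
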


\begin{proof}
Let us partition $\partial C$ into edges that are in $\partial D$ and inner edges. 
There are at most  $\length(\partial C) \leq 3\Area (D)$ inner edges, each of which is of complexity $1$, by Remark \ref{rem;R'}. 
The sum of complexities of the edges in  $\partial D$ is bounded by  $\| \partial D\|_1$.  
This proves the first assertion.
The second assertion is clear  from the definition.
\end{proof}

\begin{rem}\label{rem;std} 
If $C$ is any cluster, then $\Area(C)\geq \length(\partial C)-2$.
Indeed, Denoting by $F$, $E_{int}$, $E_{ext}$ the number of $2$-cells, interior edges and boundary edges, 
connectedness of the dual graph implies $F-1\leq E_{int}$. 
Since cells of $C$ have at most $3$ sides, $ 2E_{int}+E_{ext}\leq 3F$.
It follows that  $E_{ext}\leq F+2$ as required.
\end{rem}

The following lemma shows that in many situations, clusters are simply connected.

\begin{lemma}\label{lem;sc_clusters}
Let $w$ be a word over $\hat S$ defining the trivial element in $G$.
Let $D$ be a minimal Van Kampen diagram for $w$ over the presentation $X_\rho$. 
Assume that $\rho\geq 3\Area (D)$.

If $D$ is chosen among diagrams for $w$ over $X_\rho$
to minimize successively the area, and the number of $2$-cells of type $\calR'$,
 then every cluster of $D$ is simply connected.

Assume either that $D$ is as above and that all its clusters are standardly filled,
or that $D$ minimizes successively the area,
 the number of $2$-cells of type $\calR'$ and $\|D\|_\infty$.
Then $$||D||_\infty\leq 3\Area(D)+||w||_1.$$
 \end{lemma}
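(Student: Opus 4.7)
The plan is to prove both assertions by local surgery. For the simple-connectedness claim, I will show that a cluster that is not simply connected permits a replacement that strictly decreases the area, contradicting area-minimality. For the complexity bound, hypothesis~(a) is nearly immediate from Lemma~\ref{lem_std}, and I will reduce hypothesis~(b) to~(a) by swapping each cluster for a standard filling without changing the area or the $\calR'$-count.

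For the first claim I argue by contradiction: suppose a cluster $C$ of type $\tilde H_i$ has a boundary component $\gamma$ bounding a subdisk $U \subset D$ disjoint from $\partial D$. All edges of $\gamma$ lie in the interior of $D$, so by Remark~\ref{rem;R'} they have complexity~$1$ and carry labels in $\tilde S_i$. Since cells of $C$ have at most three edges, $\length(\gamma) \le \length(\partial C) \le 3\Area(C) \le 3\Area(D) \le \rho$. As $\gamma$ bounds the disk $U$, its label $w_\gamma$ is trivial in $G$, hence trivial in $H_i$ (since $H_i$ embeds in $G$); and as $|w_\gamma| \le \rho$, it lies in $\calR_\rho(S_i)$ and is therefore trivial in $\tilde H_i$.

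The crux is a counting argument on $U$. For each edge $e$ of $\gamma$, the cell of $U$ incident to $e$ cannot be of type $\tilde H_i$ (else $e$ would not be on $\partial C$) nor of type $\tilde H_j$ with $j \neq i$ (label-type mismatch), so it must be of type $\calR'$; since the label of $e$ lies in $\tilde S_i$ and not in $S$, this cell must be the digon $\tilde s^{-1} p_i(\tilde s)$. Each such digon contains a unique $\tilde S_i$-edge, so distinct edges of $\gamma$ correspond to distinct digons, forcing $|U| \ge \length(\gamma)$. Triviality of $w_\gamma$ in $\tilde H_i$ now lets me replace the contents of $U$ by a standard filling of $\gamma$ using $\length(\gamma) - 2$ cells of type $\tilde H_i$ (the degenerate cases $\length(\gamma) \le 2$ being handled by analogous ad hoc moves). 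The new diagram has area $\Area(D) - |U| + \length(\gamma) - 2 \le \Area(D) - 2$, contradicting the area-minimality of $D$.

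For the complexity bound, under hypothesis~(a) Lemma~\ref{lem_std} directly yields $\|C\|_\infty \le \|\partial C\|_1 \le 3\Area(D) + \|w\|_1$ for every cluster $C$, while edges outside any cluster lie in an $\calR'$-cell or on $\partial D$ and so have complexity at most $\|w\|_\infty \le \|w\|_1$, giving the bound. Under hypothesis~(b), the first claim gives simply-connected clusters, and Remark~\ref{rem;std} combined with area-minimality forces $\Area(C) = \length(\partial C) - 2$ for every cluster $C$, since otherwise replacing $C$ by a standard filling would strictly decrease the area. Replacing every cluster of $D$ simultaneously by such a filling produces a diagram $D'$ with the same area and the same $\calR'$-count as $D$ but with standardly filled clusters; case~(a) then gives $\|D'\|_\infty \le 3\Area(D) + \|w\|_1$, and the $\|\cdot\|_\infty$-minimality of $D$ among diagrams with the same area and $\calR'$-count yields $\|D\|_\infty \le \|D'\|_\infty$. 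The main subtleties are the digon-counting inside $U$ in the first part and the degenerate cases $\length(\gamma) \le 2$.
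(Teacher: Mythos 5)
Your proof is correct and follows the same overall strategy as the paper's, but with one genuine difference in the first part. For the simple-connectedness claim, the paper does not count digons and decrease area; instead it observes that the inner subdiagram $D'$ bounded by $\gamma$ can be \emph{relabeled} (keeping the very same cell combinatorics, hence the same area) so that every $2$-cell carries a relator of $\mathcal{T}(\tilde H_i)$, which is possible since the word on $\gamma$ is short enough to lie in $\mathcal{R}_\rho(S_i)$; this reduces the number of $\mathcal{R}'$-cells without touching the area, contradicting the secondary minimization. Your alternative — showing each edge of $\gamma$ is incident in $U$ to a distinct digon $\tilde s^{-1}p_i(\tilde s)$, so $|U|\geq \length(\gamma)$, then swapping $U$ for a standard filling of area $\length(\gamma)-2$ — derives a contradiction to area-minimality alone, which is a slight strengthening (it does not need the $\mathcal{R}'$-minimality hypothesis for that claim). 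The price you pay is the need to treat the degenerate cases $\length(\gamma)\leq 2$ separately, whereas the paper's relabeling move is insensitive to $\length(\gamma)$; you flag these cases but only wave at them, and in a careful write-up the bigon and monogon reductions would need to be spelled out. Note that your digon-counting observation is in any case implicitly required by the paper's argument too, since one must know $D'$ contains at least one $\mathcal{R}'$-cell for the relabeling to strictly decrease the $\mathcal{R}'$-count. Your treatment of the second assertion — case (a) from Lemma~\ref{lem_std} plus the observation that non-cluster edges have complexity $1$ or are on $\partial D$, and case (b) by replacing clusters with standard fillings of equal area and $\mathcal{R}'$-count and invoking $\|\cdot\|_\infty$-minimality — coincides with the paper's proof.
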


\begin{proof} 
Assume by contradiction that there exists a cluster $C$ of type $\Tilde H_i$ that is not simply connected. 
Then there is a simply connected subdiagram $D'\subset D$ such that edges of $\partial D'$ are all in $\partial C\setminus \partial D$.
Since edges of $\partial D'$ lie in a $2$-cell, $\length(\partial D')\leq 3 \Area(D)$.
Moreover $\| \partial D' \|_\infty=1$,  since by Remark \ref{rem;R'}, every edge in $\partial C\setminus \partial D$ has complexity $1$.
Thus,  $\| \partial D' \|_1\leq 3 \Area(D)$.  
Since $\rho\geq 3\Area (D)$, the definition of $X_\rho$ says that the word labeled by $\partial D'$ is trivial in $\tilde H_i$. 
One can then replace the subdiagram bounded by $c$ by a diagram with same combinatorics, 
and with cells of type  $\Tilde H_i$. This contradicts the minimality of $D$ for the number of $2$-cells of type $\calR'$.
It follows that all clusters of $D$ are simply connected.

Assume now that all clusters are standardly filled.
By Lemma \ref{lem_std}, for each cluster $C$, $\|C\|_\infty\leq\|\partial C\|_1\leq 3\Area(D)+\|w\|_1$.
Since each edge of $\Dthick$ of complexity at least $2$ is contained in a cluster, this implies that $\|\Dthick\|_\infty\leq 3\Area(D)+\|w\|_1$.

Finally, assume that $D$ minimizes successively the area,
 the number of $2$-cells of type $\calR'$ and $|D||_\infty$.
Since  clusters of $D$ are simply connected, we can modify $D$ to
a diagram $D'$ whose clusters are standardly filled, and having the same area and the same number
 $2$-cells of type $\calr'$ as $D$. In particular, $||D||_\infty\leq ||D'||_\infty$. By the argument above,
$||D'||_\infty \leq  3\Area(D)+||w||_1 $ which concludes the proof.
\end{proof}

\subsection{Complicated clusters}

A cluster $C$ is said to be \emph{complicated} if $\partial C\cap \partial D$ contains at least two edges.

\begin{lemma}\label{lem;complexity_bounds}
 Assume that $D$ is a Van Kampen diagram, and $C\subset D$ is a simply connected cluster.

If $C$ is  not complicated, then
$\|\partial C\|_\infty \leq \length(\partial C)$, $\|\partial C\|_1 \leq 2 \length(\partial C)$. 
\end{lemma}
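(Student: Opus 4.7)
The plan is to exploit the fact that a simply connected cluster $C$ of type $\tilde H_i$ can be viewed as a Van Kampen diagram whose $2$-cells carry triples trivial in $\tilde H_i$; hence the cyclic word read along $\partial C$ represents the trivial element of $\tilde H_i$. Combined with Remark \ref{rem;R'}, which tells us that every edge in $\partial C \setminus \partial D$ has complexity $1$ (so its label is an element of $\tilde S_i$), both estimates should fall out by a one-line generator count.

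Concretely, I would set $n = \length(\partial C)$ and write the labels of $\partial C$ cyclically as $a_1, \dots, a_n$ with each $a_j \in \tilde H_i$. The hypothesis that $C$ is not complicated means that at most one of these edges, say the one labeled $a_1$, lies on $\partial D$; for all $j \geq 2$ we then have $\|a_j\| = |a_j|_{\tilde S_i} \leq 1$. The triviality relation $a_1 = (a_2 \cdots a_n)^{-1}$ in $\tilde H_i$ expresses $a_1$ as a product of at most $n-1$ elements of $\tilde S_i \cup \{1\}$, so $\|a_1\| \leq n-1$. From here,
\[
\|\partial C\|_\infty \leq \max(n-1,\,1) \leq \length(\partial C)
\]
and
\[
\|\partial C\|_1 \leq (n-1) + (n-1) \cdot 1 \leq 2\length(\partial C)
\]
are immediate. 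The case $\partial C \cap \partial D = \emptyset$ is handled identically (all edges have complexity $1$).

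I do not foresee any real obstacle. The only mild point of care is verifying that the boundary word of $C$ is trivial in $\tilde H_i$, not merely in $H_i$; but this is automatic since the $2$-cells of $C$ are of type $\tilde H_i$ (labeled by elements of $\calT(\tilde H_i)$) and $C$ is a disk. Everything else is bookkeeping built on Remark \ref{rem;R'} and the definition of the complexity $\|\cdot\|$.
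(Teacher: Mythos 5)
Your proof is correct and is essentially the same argument as the paper's: both observe that in a non-complicated simply connected cluster all but one boundary edge have complexity $1$, and that the remaining edge's label equals in $\tilde H_i$ a product of at most $\length(\partial C)-1$ generators, giving the two bounds by direct count. The explicit remark that the boundary word of $C$ is trivial in $\tilde H_i$ (not just $H_i$) because the cells of $C$ carry relators from $\calT(\tilde H_i)$ and $C$ is a disk is the same justification the paper uses, just spelled out a bit more.
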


\begin{proof}
Denote by $\Tilde H_i$ the type of the cluster $C$,
so that edges of $C$ are labeled by elements of $\Tilde H_i$.
If $C$ is not complicated, all edges of $\partial C$ but one have complexity $1$.
The cluster being simply connected, the label of the remaining edge has the same image in $\Tilde H_i$ 
as a product of $\length(\partial C)-1$ elements of $S_i$. 
Therefore, this edge has complexity at most $\length(\partial C)-1$.
It follows that  $\|\partial C\|_\infty \leq \length(\partial C)$,
and  $\|\partial C\|_1 \leq  (\length(\partial C)-1) + \sum_{e\in \partial C} 1$. 
This proves the lemma. 
\end{proof}

\begin{lemma}[See also {\cite[Lemma 2.27]{Osin_relatively}}]\label{lem;bound_norm_infty}
Let $D$ be a Van Kampen diagram whose clusters are simply connected, non complicated, and standardly filled.

Then $\|\Dthick \|_\infty \leq 6 \Area (D) $.
\end{lemma}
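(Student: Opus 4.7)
\medskip

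\noindent\textbf{Proof proposal.} The plan is to show that every edge of $\Dthick$ of complexity at least $2$ must lie in some cluster, and then to bound its complexity by chaining together the estimates already proved in Lemma~\ref{lem_std} and Lemma~\ref{lem;complexity_bounds}.

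The first step is a structural observation: every $2$-cell of type $\calR'$ has all boundary edges of complexity $1$. Indeed, the triangular relators in $\calR$ are words over $S^\pm$, and each two-letter relator $\Tilde s\m p_i(\Tilde s)$ contributes an edge labeled by $\Tilde s\in\Tilde S_i$ (of complexity $|\Tilde s|_{\Tilde S_i}=1$) together with an edge labeled by $p_i(\Tilde s)\in S$. Consequently, any edge $e\in\Dthick$ with $\|e\|\geq 2$ necessarily lies in a $2$-cell of type $\Tilde H_i$, hence in some cluster $C$ of $D$.

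The second step is to bound $\|e\|$ by $\|\partial C\|_1$. If $e\in\partial C$ this is immediate from $\|e\|\leq\|\partial C\|_\infty\leq\|\partial C\|_1$. If $e$ is interior to $C$, I would invoke the second assertion of Lemma~\ref{lem_std}: since $C$ is standardly filled, $\|e\|\leq\|C\|_\infty\leq\|\partial C\|_1$. Now, because $C$ is simply connected and not complicated, Lemma~\ref{lem;complexity_bounds} gives $\|\partial C\|_1\leq 2\length(\partial C)$.

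The last step is a routine cell count: every $2$-cell of $C$ has at most $3$ sides, each interior edge lies in two cells and each boundary edge in at least one, so $\length(\partial C)\leq 3\Area(C)\leq 3\Area(D)$. Combining these estimates yields $\|e\|\leq 2\cdot 3\Area(D)=6\Area(D)$, while edges of complexity $1$ are handled trivially. The only point that really requires care is the first one—checking that high complexity cannot hide inside a $2$-cell of type $\calR'$—after which the bound falls out mechanically; I do not foresee any serious obstacle.
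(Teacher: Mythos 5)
Your proof is correct and takes essentially the same route as the paper's: edges in cells of type $\calR'$ have complexity $1$, while for an edge in a cluster $C$ one bounds $\length(\partial C)\leq 3\Area(D)$ and combines Lemma~\ref{lem_std} (standard filling gives $\|C\|_\infty\leq\|\partial C\|_1$) with Lemma~\ref{lem;complexity_bounds} ($\|\partial C\|_1\leq 2\length(\partial C)$) to get $\|C\|_\infty\leq 6\Area(D)$. You are in fact slightly more explicit than the paper's one-line citation, since you correctly isolate the point where the ``standardly filled'' hypothesis is needed to control interior edges of $C$.
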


\begin{proof}
Any edge of $\Dthick$ is either contained in a cell of type $\calR'$ (it has complexity $1$)
or in a cluster $C$. 
Since the number of edges of $D$ that lie in the boundary of a $2$-cell is bounded by $3\times \Area(D)$,
we have $\length(\partial C)\leq 3\times \Area(D)$.
Since $C$ is not complicated,
 $\|C\|_\infty \leq 6\times \Area(D)$ by Lemma \ref{lem;complexity_bounds}. The lemma follows. 
\end{proof}

\subsection{Arcs-of-clusters and pieces}

\begin{figure}[htbp]
 \centering
 \includegraphics[width=\textwidth]{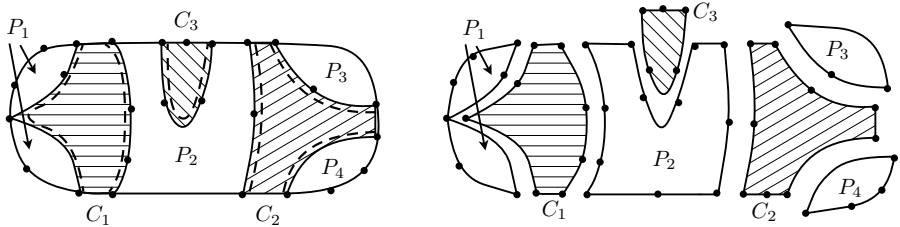}
 \caption{$3$ complicated clusters, $4$ regular pieces, and $6$ arcs-of-clusters}
 \label{fig_cluster}
\end{figure}

Consider a diagram $D$ whose clusters are simply connected. 
An \emph{arc-of-cluster}  is a maximal subpath  $c\subset\partial C$ for some complicated cluster $C$ that does not
contain any edge of $\partial D$ (see Figure \ref{fig_cluster}). 
Since $\partial C$ is an embedded circle, each arc-of-circle $c$ is an embedded arc with endpoints in $\partial D$,
and $c\cap\partial D$ contains no edge, but it may contain vertices distinct from its endpoints.

\newcommand{\Int}{\mathop{\mathrm{int}}}
We define \emph{regular pieces} of $D$ as the connected components of $D\setminus \rond\calc$ 
where $\rond\calc$
denotes the interior in $D$ of the union of all complicated clusters in $D$ 
(edges in $\partial D\cap \partial C$ for some complicated cluster are in  $\rond\calc$),
see Figure \ref{fig_cluster}.
Regular pieces and complicated clusters are called \emph{pieces}.

Here is an alternative definition. For each complicated cluster $C$, consider properly embedded arcs 
with endpoints in $\partial D$,  that are very close and parallel 
to each arc-of-cluster, obtained by pushing inside $C$ the arcs-of-clusters.
Let $\cala$ be the union of such embedded arcs when $C$ ranges over all complicated clusters.
Then connected components of $S\setminus \cala$ are in one-to-one correspondence with pieces.
On figure \ref{fig_cluster}, $\cala$ is represented by dotted lines.

Clearly,  the set of pieces induces a partition of the set of  $2$-cells of $D$. 
There is a natural  \emph{incidence graph} $\calg$ for this partition,
whose vertices are the pieces, whose edges are the arcs-of-clusters, the two endpoints of an edge being
the cluster and the regular piece on both sides of the corresponding arc-of-cluster.

\begin{lemma} \label{lem;arranging_pieces}
Let $D$ be a Van Kampen diagram, and 
assume that any cluster of $D$ is simply connected. 

The incidence graph $\calg$ is a bipartite tree 
and the degree of a vertex $v$ associated to a complicated cluster $C$ is at most the number of edges in $\partial D \cap \partial C$, 
with strict inequality when the vertex is $v$ is a leaf of the tree $\calg$.
\end{lemma}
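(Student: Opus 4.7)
The plan is to establish the three assertions in order: bipartiteness of $\calg$, the tree property, and the degree bound together with its strict form at leaves.

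For bipartiteness, the key observation is that each arc-of-cluster $c\subset\partial C$ of a complicated cluster $C$ of type $\tilde H_i$ separates $C$ from an adjacent regular piece. Indeed, any edge $e$ of $c$ lies in the boundary of exactly one $2$-cell of $C$ and of exactly one $2$-cell outside $C$ (since $e\notin\partial D$). This outside $2$-cell cannot be of type $\tilde H_i$ (otherwise it would be cluster-adjacent to a cell of $C$, hence contained in $C$ itself), nor of type $\tilde H_j$ with $j\neq i$ (as cells of distinct types never share an edge). It must therefore be of type $\calR'$ and lie in a regular piece, so every edge of $\calg$ joins a complicated-cluster vertex to a regular-piece vertex.

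For the tree property, I would use the alternative description via the family $\cala$ of arcs obtained by pushing each arc-of-cluster slightly into the interior of its complicated cluster. Since each cluster is simply connected and its arcs-of-clusters are pairwise disjoint subpaths of the embedded circle $\partial C$, these pushed arcs can be realized as a pairwise disjoint family of properly embedded arcs in the disk $D$; the connected components of $D\setminus\cala$ are then in natural bijection with the pieces, and the cross-arc adjacencies recover exactly the edges of $\calg$. A straightforward induction on $|\cala|$, using that a single properly embedded arc in a disk splits it into two disks, then shows that $\calg$ is a tree.

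Finally, fix a complicated cluster $C$ with corresponding vertex $v$ in $\calg$ and set $m=|\partial D\cap\partial C|$. The embedded circle $\partial C$ decomposes as a cyclic alternation of maximal runs of edges of $\partial D$ and arcs-of-clusters. If $k$ denotes the common number of such runs, then exactly $k$ arcs-of-clusters are contained in $\partial C$, so $\deg(v)=k\leq m$. If moreover $v$ is a leaf then $\deg(v)\leq 1$, while $m\geq 2$ since $C$ is complicated, yielding the strict inequality $\deg(v)<m$. The main obstacle is the tree step, specifically the geometric verification that pushing each arc-of-cluster slightly inside its (simply connected) cluster produces a pairwise disjoint family of properly embedded arcs in $D$; once this is established, the remaining counting arguments are routine.
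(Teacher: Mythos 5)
Your proposal is correct and follows essentially the same route as the paper's proof, differing only in exposition. For bipartiteness, the paper simply invokes the definition of the incidence graph (each edge joins a complicated cluster to a regular piece by construction), whereas you supply the geometric justification for why the cell on the outside of an arc-of-cluster cannot belong to another cluster of the same or different type --- this is a useful elaboration but not a different argument. For the tree property, the paper argues directly that $\calg$ is connected (because $D$ is) and that every edge of $\calg$ disconnects it (because every arc-of-cluster separates the disk $D$), and then concludes; your induction on the number of arcs in $\cala$ is an equivalent formulation resting on the same underlying topological fact, and the pairwise-disjoint family of properly embedded arcs you flag as the ``main obstacle'' is exactly what the paper already sets up in its ``alternative definition'' of pieces in the section preceding the lemma, so nothing new needs to be established there. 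For the degree bound and the strict inequality at leaves, your cyclic-alternation count and the paper's ``each arc-of-cluster is followed by a $\partial D$-edge'' injection are the same combinatorial observation. One small point worth making explicit in your alternation argument is the degenerate case $\partial C \subset \partial D$ (no arcs-of-clusters at all, i.e.\ $C=D$), where the alternation collapses but the bound $\deg(v)=0\le m$ still holds trivially.
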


\begin{proof}
The graph is bipartite by definition. It is connected because $D$ is.
Since every arc-of-cluster separates $D$,
every edge of the incidence graph disconnects it. This proves that $\calg$ is a tree.

Consider a vertex $v$ associated to a complicated cluster $C$. 
The  degree of $v$ is, by definition,  the number of arcs-of-clusters on $\partial C$. 
Since $C$ is simply connected, $\partial C$ is an embedded circle, and since $C$ is complicated, $\partial C$ contains
an edge of $\partial D$. 
By maximality in the definition of arc-of-clusters, each such arc is followed in $\partial C$ (with a chosen fixed orientation) by an edge of $\partial C \cap \partial D$. This association, which is clearly one-to-one, ensures the bound on the degree. 

 Finally, if $v$ is a leaf of $\calg$, its degree is $1$ and $\partial D\cap\partial C$ contains at least $2$ edges because $C$ is complicated.
\end{proof}

The following result of \cite{Dah_finding} was, to some extend, left to the reader. We include a proof.

\begin{lemma}\label{lem;bound_pieces_arcs}
Let $D$ be a Van Kampen diagram.   If every cluster is simply connected, 
then the number of pieces, and the number of arc-of-clusters are both bounded by $\length(\partial D)$. 
\end{lemma}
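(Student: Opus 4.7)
My plan is to combine the tree structure of Lemma \ref{lem;arranging_pieces} with a careful count of how the edges of $\partial D$ are distributed among the pieces. Let $V$, $E$, and $L$ denote respectively the number of pieces, the number of arcs-of-clusters, and the number of leaves of the incidence graph $\calg$. Since $\calg$ is a tree, $V = E+1$, so it suffices to bound $E$. I will prove
$$\length(\partial D) \;\ge\; E + L,$$
from which both desired bounds follow immediately: any tree with $V \ge 2$ vertices has $L \ge 2$, hence $E \le \length(\partial D) - 2$ and $V \le \length(\partial D) - 1$; the remaining cases $V \le 1$ are trivial.

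For the key inequality, each edge of $\partial D$ lies in exactly one piece, and I estimate the contributions separately. If $C$ is a complicated cluster, Lemma \ref{lem;arranging_pieces} gives $\deg_\calg(C) \le n(C) := |\partial C \cap \partial D|$, with strict inequality when $C$ is a leaf, so $n(C) \ge \deg_\calg(C) + \mathbf{1}_{\{C \text{ leaf}\}}$. Since $\calg$ is bipartite between complicated clusters and regular pieces, one has $\sum_C \deg_\calg(C) = E$, so summing yields
$$\sum_C n(C) \;\ge\; E + L_C,$$
where $L_C$ is the number of complicated clusters that are leaves. If instead $P$ is a leaf regular piece, the unique arc-of-cluster $c$ bounding $P$ is a properly embedded arc in the disk $D$ with two distinct endpoints on $\partial D$. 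It separates $D$ into two subdisks, and since $P$ is a leaf it fills the one opposite to the adjacent complicated cluster; the portion of $\partial P$ lying on $\partial D$ is then the arc of $\partial D$ between the endpoints of $c$, which contains at least one edge. Summing over the $L_R$ leaf regular pieces and combining gives $\length(\partial D) \ge E + L_C + L_R = E + L$.

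The main subtlety to verify is that the other side of an arc-of-cluster is always a regular piece rather than another complicated cluster, which ensures that $\calg$ is genuinely bipartite and that $\sum_C \deg_\calg(C)$ equals $E$. This relies on the fact that distinct clusters cannot share an edge: since $S$ and the $\Tilde H_i$ are disjoint inside $\hat S$, an edge labeled by an element of $\Tilde H_i$ can bound only a $2$-cell of type $\Tilde H_i$ or a two-letter cell of $\calR'$ of the form $\Tilde s\m p_i(\Tilde s)$. The other needed ingredient, that a leaf regular piece truly occupies the whole disk cut off by its bounding arc, follows from planarity together with the leaf condition, which prevents any further arc-of-cluster from cutting that disk.
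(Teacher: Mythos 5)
Your proof is correct and follows essentially the same strategy as the paper's: tree structure of the incidence graph, bipartite degree counting against Lemma~\ref{lem;arranging_pieces}, and extracting the extra unit from the leaves. The only stylistic difference is that you package the paper's two-case analysis (leaf cluster versus leaf regular piece) into a single inequality $\length(\partial D)\ge E+L$, which even yields the marginally stronger bounds $E\le\length(\partial D)-2$ and $V\le\length(\partial D)-1$ whenever $V\ge 2$.
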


\begin{proof}
The number $N$ of pieces is the number of vertices of the incidence graph $\calg$.
Since $\calg$ is a tree, $N=E+1$ where $E$ is the number of edges of $\calg$, \ie the number of arcs-of-clusters.
Denote by $v_C$ the vertex corresponding to a cluster $C$, by $d(v_C)$ its degree, 
and by $V_{cl}$ the set of all vertices of $\calg$ corresponding to clusters.
Since $\calg$ is bipartite, 
$E= \sum_{v_C\in V_{cl}} d(v_C)$. 
By lemma \ref{lem;arranging_pieces}, $d(v_C)$ is bounded by the number $e(C)$ of edges of $\partial C\cap \partial D$. 
Therefore  $E\leq \sum_{v_C\in V_{cl}} e(C) \leq \length(\partial D)$. 

Finally, if some $v_C$ is a leaf of $\calg$, this last inequality is a strict inequality, which yields $N= E+1\leq \length(\partial D)$. 
There remains the case where some leaf of $\calg$ is a regular piece $B$. This means that $\partial B=\alpha\cup \beta$ where $\alpha$ is an
arc-of-cluster, and $\beta$ is a path in $\partial D$. Since clusters are simply connected, the endpoints of $\alpha$ are distinct, so $\beta$
contains at least an edge. This implies that $ \sum_{v_C\in V_{cl}} e(C) < \length(\partial D)$, and concludes the lemma.
\end{proof}

\subsection{Reduction to diagrams of small complexity}

We are now ready to state and prove the main statement of this section.
It claims that if $X_\rho$ does not satisfy a linear isoperimetric inequality,
this shows up on diagrams of small area (this is Papasoglu's theorem) and small complexity.

\begin{prop}[{\cite[Prop. 1.5]{Dah_finding}}]\label{prop;reduction_double}
 Let  $K\geq 10^6$ and   $\rho \in \mathbb{N} \cup \{\infty\}$,  $\rho \geq  3\times 240 K$.

 Assume that $X_\rho$ fails to satisfy a linear isoperimetric inequality  of constant $K$ 
(that is, there exists a word $w$ over  the alphabet $\hat S$ such that $\Area(w)>K\length(w)$).

 Then, there exists    a word $w''$ over the alphabet $\hat S$, and a minimal Van Kampen diagram 
 $D''$  (over $X_\rho$) for $w''$, such that
\renewcommand{\labelenumi}{(\arabic{enumi})}
 \begin{enumerate}
   \item $\Area(D'') \leq 240 K$
   \item $\| D'' \|_\infty \leq 2.10^6 K^2$
   \item  $\Area(D'') > \frac{\sqrt K}{600} \length(\partial D'').  $
 \end{enumerate}
\end{prop}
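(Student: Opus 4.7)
The approach is to apply Papasoglu's theorem (Theorem \ref{theo;pap}) to the witness word $w$, obtaining a word of bounded area with quadratic isoperimetric defect, and then to represent it by a minimal Van Kampen diagram whose complexity is controlled through the cluster/piece machinery of Section \ref{sec_diagrams}.

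Papasoglu provides $w'$ over $\hat S$ with $\Area(w')\in[K/2,240K]$ and $\Area(w')>\length(w')^2/(2\cdot 10^4)$. For any minimal Van Kampen diagram $D'$ of $w'$ over $X_\rho$, this immediately gives $\Area(D')\leq 240K$ (condition (1)), and the quadratic defect forces $\length(w')\leq\sqrt{2\cdot 10^4\cdot 240K}\approx 2200\sqrt K$. To get (3), I split on $\length(w')$: when $\length(w')\geq(100/3)\sqrt{K}$ the quadratic defect itself yields $\Area(w')/\length(w')>\length(w')/(2\cdot 10^4)\geq\sqrt{K}/600$, and when $\length(w')<(100/3)\sqrt{K}$ the lower bound $\Area(w')\geq K/2$ gives $\Area(w')/\length(w')\geq(K/2)/((100/3)\sqrt K)>\sqrt{K}/600$. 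So any minimal diagram for $w'$ realizes (1) and (3).

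The real work is the complexity bound (2). I would choose $D'$ minimizing successively the area, the number of $\calR'$-cells, and $\|D'\|_\infty$. Since $\rho\geq 3\cdot 240K\geq 3\Area(D')$, Lemma \ref{lem;sc_clusters} forces every cluster to be simply connected and yields $\|D'\|_\infty\leq 3\Area(D')+\|w'\|_1\leq 720K+\|w'\|_1$. If no cluster of $D'$ is complicated, Lemma \ref{lem;bound_norm_infty} gives directly $\|D'_{\thick}\|_\infty\leq 6\Area(D')\leq 1440K$, which is well within $2\cdot 10^6 K^2$, so the pair $(w'',D'')=(w',D')$ works.

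When $D'$ carries complicated clusters, the difficulty is that boundary letters of $w'$ sitting inside them can drive $\|w'\|_1$ above the threshold. Here I would use the piece decomposition (Lemmas \ref{lem;arranging_pieces} and \ref{lem;bound_pieces_arcs}): the incidence graph is a tree with at most $\length(w')\leq 2200\sqrt{K}$ pieces, every arc-of-cluster consists of complexity-$1$ edges (Remark \ref{rem;R'}), and the standard filling keeps each cluster's complexity bounded by its boundary. A pigeonhole argument on this tree, cutting at a well-chosen arc-of-cluster, should produce a connected sub-diagram $D''\subset D'$ whose boundary label $w''$ retains enough area to keep the isoperimetric ratio above $\sqrt{K}/600$, while its boundary sees only a few of the high-complexity letters of $w'$. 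Combining Lemma \ref{lem;sc_clusters} applied to $D''$ with $\length(w')\leq 2200\sqrt{K}$ and an $O(K^{3/2})$ control on each individual boundary complexity (enforced by the $\|\cdot\|_\infty$-minimality of $D'$) gives $\|D''\|_\infty\leq 2\cdot 10^6 K^2$. The main obstacle is making this cutting step clean: one must find a cut of the incidence tree that simultaneously preserves the quadratic defect on one side and limits the high-complexity letters surviving on the new boundary, which is where the slack in the constants $\sqrt K/600$ and $2\cdot 10^6 K^2$ is spent.
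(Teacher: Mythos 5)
Your opening steps (applying Papasoglu, the arithmetic on $\Area(w')$ and $\length(w')$, the choice of $D'$ minimizing area then $\calR'$-cells then $\|\cdot\|_\infty$, and the observation that if $D'$ has no complicated cluster then Lemma~\ref{lem;bound_norm_infty} already gives (2)) all match the paper. But the case of complicated clusters --- the only non-trivial case --- is left as ``a pigeonhole argument \dots should produce a connected sub-diagram $D''\subset D'$'', and that sketch has a genuine gap, for two reasons.

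First, the paper does not cut at one well-chosen arc-of-cluster and take a sub-diagram of $D'$: it cuts at \emph{every} arc-of-cluster and, crucially, it \emph{replaces} each arc-of-cluster by a single new chord edge labelled by the product of the letters along the arc, on both the regular-piece side and the complicated-cluster side, re-filling the resulting boundaries (standard filling for the clusters, re-minimization for the pieces). This chord-replacement is not a cosmetic cleanup: when the pieces are glued back to recover a filling of $w'$, the sum $\sum_j\length(\partial\tilde D''_j)+\sum_k\length(\partial\tilde C''_k)$ is bounded by $\length(\partial D')+2n_a$ precisely because each (possibly long) arc-of-cluster was collapsed to one edge, and $n_a\le\length(\partial D')$ by Lemma~\ref{lem;bound_pieces_arcs}. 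If you keep $D''$ as a sub-diagram of $D'$ with the arcs-of-clusters themselves as part of its boundary, the corresponding sum is bounded only by $\length(\partial D')+2\cdot(\text{total length of arcs-of-clusters})$, which you do not control, and the contradiction $\Area(D')\le\frac{\sqrt K}{600}(\length(\partial D')+2n_a)\le\frac{\sqrt K}{200}\length(\partial D')$ never closes. Relatedly, the final $D''$ in the paper is \emph{not} a sub-diagram of $D'$; it is a minimal diagram over the modified boundary word, and it is that re-minimization (combined with Lemma~\ref{lem;bound_norm_infty} on $D''_j$, then $\|\partial D''_j\|_1\le 3\Area(D''_j)\|\partial D''_j\|_\infty$, then Lemma~\ref{lem;sc_clusters} on $\tilde D''_j$) that yields the stated $2\cdot10^6K^2$ bound; your ``$O(K^{3/2})$ control on each individual boundary complexity'' is not a bound that the $\|\cdot\|_\infty$-minimality of $D'$ by itself provides for a sub-diagram.

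Second, you only derive $\Area(w')>\frac{\sqrt K}{600}\length(w')$ at the outset, whereas the paper derives the stronger $\Area(w')>\frac{\sqrt K}{200}\length(w')$ (directly from $\Area(w')>\sqrt{\Area(w')/(2\cdot10^4)}\,\length(w')$ and $\Area(w')\ge K/2$). That factor of $3$ is exactly what the gluing argument spends, via $\length(\partial D')+2n_a\le 3\length(\partial D')$, to conclude that \emph{some} piece has ratio $>\frac{\sqrt K}{600}$. With only $\frac{\sqrt K}{600}$ to start, the contradiction argument has no room, and there is no reason a single well-chosen piece should inherit the ratio. So condition~(3) for your candidate $D''$ is simply not established in the hard case, and the heuristic as written does not repair it.
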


\begin{proof}
The first step is to apply Papasoglu's Theorem \ref{theo;pap} to the presentation $X_\rho$ 
to obtain a word $w'$ over $\Hat S$ 
for which  $K/2 \leq \Area(w') \leq 240 K$, and  $\Area(w') > \frac{1 }{2\times 10^4}  \length(w')^2$.  

Using $\sqrt{\Area(w')}>\frac{\length(\partial w')}{\sqrt{2\times 10^4}}$ and  $\Area(w')\geq K/2$,
we get
$$\Area(w') > \sqrt{ \frac{\Area(w')}{2\times 10^4}    }  \length(w')  
\geq  \frac{\sqrt{K}}{200}  \times \length(w'). $$

Choose a diagram $D'$ 
among minimal area diagrams over $X_\rho$ for $w'$ 
so that the number of $2$-cells of type $\calR'$ is minimal. 
We claim that up to changing $w'$, we can assume that $D'$ is thick
\ie all edges lie in the boundary of a $2$-cell.
Indeed, if all connected components
 $A'_1,\dots,A'_l$ of $D'_\thick$ satisfy
$\Area(A'_i) \leq  \frac{\sqrt{K}}{200}  \times  \length(\partial A'_i) $,
then $$\Area(D')=\sum_i \Area(A'_i) \leq \frac{\sqrt{K}}{200} \sum_i   \length(\partial A'_i)\leq \frac{\sqrt{K}}{200}\times \length(w')$$
a contradiction.
It follows that some component $A'_i$ satisfies 
$\Area(A'_i) >  \frac{\sqrt{K}}{200}  \times  \length(\partial A'_i) $.
Obviously, $\Area(A'_i)\leq \Area(D')\leq 240K$, and $A'_i$ is a diagram for $\partial A'_i$
that minimizes the area and the number of cells of type $\calr'$
(if not, substituting a diagram of smaller area for $\partial A'_i$ in $D'$ contradicts minimality of $D'$).
This proves that we can assume that $D'$ is thick. 

We do not have any control on the complexity of a diagram filling $w'$ yet.
By choice of $\rho$, Lemma \ref{lem;sc_clusters} shows that the clusters of $D'$ are simply connected. 
We can modify $D'$ and assume that all clusters are standardly filled.
By Remark \ref{rem;std}, $D'$ still minimizes area and the number of cells of type $\calr'$.
By Lemma \ref{lem;bound_pieces_arcs}, the number of pieces  in the decomposition into complicated clusters and regular pieces
is at most $\length(\partial D')$.

\begin{figure}[htbp]
  \centering
  \includegraphics{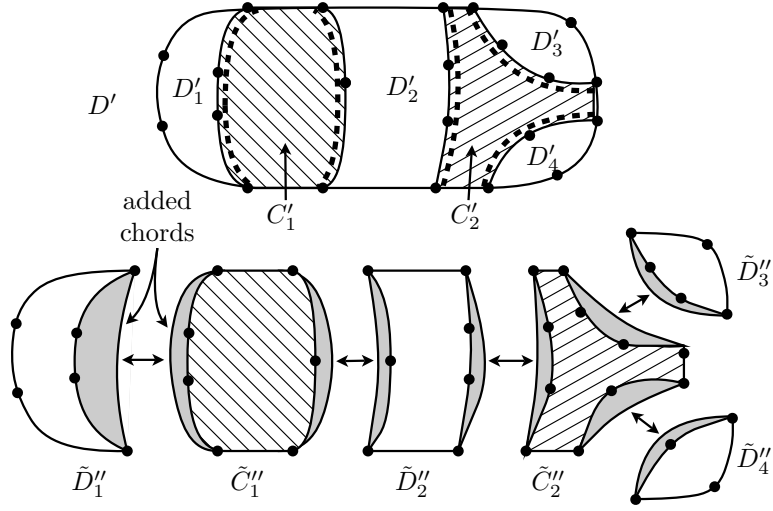}
  \caption{Adding chords to the pieces of $D'$, and regluing them together}
  \label{fig_chords}
\end{figure}

Let $C'_1, \dots, C'_s$ be the complicated clusters of $ D'$, and  $D'_1, \dots, D'_r$, be the regular pieces.  
We construct new diagrams $C''_i$, $D''_j$, and $\Tilde C''_i$, $\Tilde D''_j$ from $C'_i$, $D'_j$ by first \emph{adding chords},
then by changing the triangulation as follows (see Figure \ref{fig_chords}).

Fix a  complicated cluster $C'_k$ of $D'$, and denote by $\Tilde H_i$ its type.
Its boundary $\partial C'_k$ is
a union of pairwise disjoint arcs-of-clusters, together with arcs in $\partial  D'$.
Consider an arc-of-cluster $c\subset \partial C'_k$ whose edges are labeled by elements $a_1,\dots,a_n$ of $\Tilde H_i$,
and let $a_c=a_1\dots a_n\in \Tilde H_i$ be their product.
We glue along $c$ a standardly filled disk with boundary labeled by $a_1,\dots,a_n,a_c\m$.
We name the new edge labeled by $a_c\m$ a \emph{chord}. 
Performing this operation for each arc-of-cluster, we get a disk $C''_k$ made of cells of type $\Tilde H_i$.
Finally, we change the triangulation of this disk to a standard filling,
and we call $\Tilde C''_k$ the obtained diagram.
Note that $\Area (\Tilde C''_k) \leq \length(\partial \Tilde C''_k) -2$.

Now, we perform a similar operation for each regular piece $D'_j$.
For each arc-of-cluster $c\subset \partial D'_j$ labeled by $a_1,\dots,a_n\in \Tilde H_i$
(now, the type $\Tilde H_i$ may depend on $c$),
we define  $a_c=a_1\dots a_n\in \Tilde H_i$,
and glue to $C'_k$ along $c$ 
a new cluster of type $\Tilde H_i$, standardly filled, whose boundary
is labeled by $a_1,\dots,a_n,a_c^{-1}$.
Since the filling is standard, the area of the added cluster is $(n+1) -2 = \length(c)-1$. 
Performing this operation for each arc-of-cluster, we get the new diagram $D''_j$.
Finally, we take for $\Tilde D''_j$ a diagram with boundary $\partial D''_j$,
and minimizing successively the area and the number of $2$-cells of type $\calr'$.
\\

We are going to bound $\|\Tilde D''_j\|_\infty$ by first bounding $\|D''_j\|_\infty$.
Since all complicated clusters of $ D'$ are $C'_1,\dots, C'_s$,  $D''_j$ 
has no complicated cluster coming from $ D'$. The newly created clusters in $D''_j$
have just one edge in $\partial D''_j$, so are not complicated.
Therefore, clusters of $D''_j$ are not complicated, simply connected, and standardly filled.
Since $D'$ is thick, so is $D''_j$.
Applying Lemma \ref{lem;bound_norm_infty} to $D''_j$, we get 
$\|D''_j\|_\infty \leq 6\times \Area(D''_j) \leq 6\times 240K.$  

In particular, $\|\partial \Tilde D''_j\|_\infty=\|\partial D''_j\|_\infty \leq 6\times 240K$,
 and since $D''_j$ is thick, $\|\partial D''_j\|_1\leq 3\Area(D''_j)\|\partial D''_j\|_\infty
\leq 18\times (240K)^2$.
Applying Lemma \ref{lem;sc_clusters} to $\Tilde D''_j$, we get
$$\|\Tilde D''_j\|_\infty\leq 3\Area(D''_j)+\|\partial D''_j\|_1
\leq 3\times 240K+  18\times (240K)^2\leq 2.10^6 K^2.$$
This proves that for all  $j\in \{1,\dots, r\}$, $\Tilde D''_j$ satisfies assertions (1) and (2) of the proposition.

We now prove that one of the diagrams $\Tilde D''_j$, $j=1,\dots, r$ must satisfy (3).
Assume  by contradiction that for all $j\in\{1,\dots,r\}$, $\Area(\Tilde D''_j) \leq \frac{\sqrt{K}}{600} \length(\partial \Tilde D''_j)$. 
Note that  $\Tilde C''_k$ satisfies this inequality as well. 
Indeed, $\Area(\Tilde C''_k) \leq \length(\partial\Tilde C''_k) $, 
and by assumption, $K\geq 10^6$ so  $\frac{\sqrt{K}}{600}\geq 1$.

Gluing together the diagrams $\Tilde D''_1,\dots,\Tilde D''_r$ and $\Tilde C''_1,\dots,\Tilde C''_s$ 
pairwise along the two chords corresponding to a given arc-of-cluster as shown on Figure \ref{fig_chords},
we get another (non necessarily minimal) Van Kampen diagram $\Tilde D'$ for $ w'$.

We have 
\begin{eqnarray*}
\Area(D')&\leq& \Area(\Tilde D') = \sum_j \Area (\Tilde D''_j) \ + \   \sum_k \Area (\Tilde C''_k) \\
  & \leq &  \frac{\sqrt{K}}{600}\Big(\ \sum_j  \length(\partial \Tilde D''_j) \ + \ \sum_k  \length(\partial \Tilde C''_k) \Big)\\
  & \leq &  \frac{\sqrt{K}}{600}\Big( \length(\partial D') + 2 n_a \Big)
\end{eqnarray*}
where $n_a$ 
is the number of arcs-of-clusters in $D'$.  By lemma \ref{lem;bound_pieces_arcs},  $n_a \leq  \length(\partial D')$, 
so $\Area(D') \leq \frac{\sqrt{K}}{200} \times  \length(\partial D')$, 
thus contradicting the property of $D'$ established at the beginning of the proof. 
\end{proof}

\section{Consequences}\label{sec_conseq}

\begin{coro}\label{cor_lin_Xrho}
 Assume that $X_\infty$ satisfies a linear isoperimetric inequality of factor $K\geq 10^6$. 
Let $K'=(600K)^2$ and $\rho(K)=10^{26}K^5$.
Then for all $\rho \geq \rho(K)$,  $X_\rho$ satisfies a linear isoperimetric inequality of factor $K'$.
\end{coro}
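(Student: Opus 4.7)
I will argue by contradiction: suppose $X_\rho$ fails to satisfy a linear isoperimetric inequality of factor $K'=(600K)^2$. Since $K'\geq 10^6$ and $\rho\geq\rho(K)\geq 3\cdot 240 K'$, Proposition~\ref{prop;reduction_double} applied with $K'$ in place of $K$ produces a word $w''$ over $\hat S$ and a minimal Van Kampen diagram $D''$ over $X_\rho$ for $w''$ with $\Area(D'')\leq 240 K'$, $\|D''\|_\infty\leq 2\cdot 10^6 (K')^2$, and
\[
\Area(D'')>\tfrac{\sqrt{K'}}{600}\length(w'')=K\length(w'').
\]

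I then transport the witness $w''$ to $X_\infty$ by applying the quotient maps $p_i:\tilde H_i\twoheadrightarrow H_i$ (where $\tilde H_i$ is the group appearing in $X_\rho$) letter by letter, producing a word $\bar w''$ over the alphabet of $X_\infty$, trivial in $G$, of length $\length(w'')$. By the assumed linear isoperimetry of $X_\infty$, there is a Van Kampen diagram $E$ for $\bar w''$ over $X_\infty$ with $\Area(E)\leq K\length(w'')\leq 240 K'$, which I choose to also minimise successively the number of $\calR'$-cells and $\|E\|_\infty$. Lemma~\ref{lem;sc_clusters} (whose hypothesis $\rho\geq 3\Area(E)$ is vacuous for $X_\infty$) then gives
\[
\|E\|_\infty \leq 3\Area(E)+\|\bar w''\|_1 \leq 3K\length(w'')+\length(w'')\,\|D''\|_\infty =: L,
\]
and a direct substitution using $K'=(600K)^2$ and $K\geq 10^6$ shows $3\max(L,\|w''\|_\infty)\leq 10^{26}K^5\leq\rho$.

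The crux is to lift $E$ cell-by-cell to a Van Kampen diagram $\tilde E$ over $X_\rho$ with the same combinatorics and with boundary $w''$. Cells of type $\calR'$ transfer unchanged. For each edge of $E$ carrying a label $a\in H_i$, I fix a lift $\tilde a$ in the group $\tilde H_i$ of $X_\rho$ as follows: for boundary edges, use the letter occurring in $w''$ at the corresponding position; for interior edges, fix once and for all a shortest word $u_a\in\bbF_{S_i}$ representing $a$, so $|u_a|=|a|_{S_i}\leq\|E\|_\infty\leq L$, and let $\tilde a$ be the class of $u_a$ in $\tilde H_i=\langle S_i\mid\calR_\rho(S_i)\rangle$. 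For any cell of $E$ of type $\calT(H_i)$ with boundary $(a,b,c)$, the product $\tilde a\tilde b\tilde c$ is represented by the concatenation of the three chosen words, a word in $\bbF_{S_i}$ trivial in $H_i$ (since $abc=1$) and of length at most $3\max(L,\|w''\|_\infty)\leq\rho$. Hence this concatenation belongs to $\calR_\rho(S_i)$, forcing $\tilde a\tilde b\tilde c=1$ in $\tilde H_i$, so the lifted cell is a valid relator of $X_\rho$. The resulting $\tilde E$ fills $w''$ with $\Area(\tilde E)=\Area(E)\leq K\length(w'')$, contradicting $\Area_{X_\rho}(w'')=\Area(D'')>K\length(w'')$.

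The main obstacle is to guarantee that every triangular relator used inside the minimal $X_\infty$-filling $E$ is recognisable as a relator of $X_\rho$. This forces one to simultaneously control $\|E\|_\infty$ (via Lemma~\ref{lem;sc_clusters}, which is precisely what allows the complexity of a minimal filling to be bounded in terms of the boundary data) and to choose the interior lifts through shortest representatives in $\bbF_{S_i}$, so that the lifted relators fit within the length budget $\rho$ defining $\calR_\rho(S_i)$; the polynomial $\rho(K)=10^{26}K^5$ is tuned exactly to absorb the resulting bound $3L$.
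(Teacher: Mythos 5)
Your argument is correct and is essentially the paper's proof, with the lifting step spelled out explicitly rather than packaged as the map $p^{-1}_*$ that the paper sets up in the discussion before Corollary~\ref{cor_lin_Xrho} (namely, that $p_i$ is a bijection between the $\rho/2$-balls of $\tilde H_i$ and $H_i$, so a diagram $D$ over $X_\infty$ with $\|D\|_\infty\le\rho/3$ pulls back under $p^{-1}$ to a diagram over $X_\rho$). Your "shortest word $u_a$" lift is exactly this $p_i^{-1}$, and the constants check out against $\rho(K)=10^{26}K^5$.
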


Before proving the corollary, we need to relate more explicitly the presentations $X_\rho$ and $X_\infty$.
Consider $\Hat S_{\rho}=S\dunion \Tilde H_1\dunion\dots \dunion \Tilde H_n$ and
$\Hat S_{\infty}=S\dunion  H_1\dunion\dots \dunion H_n$ the corresponding generating sets.
The morphisms $p_i:\Tilde H_i\ra H_i$ induce an obvious map $p:\Hat S_\rho\ra \Hat S_{\infty}$ that is the identity on $S$
and maps $\Tilde H_i$ to $H_i$ through $p_i$. If $w=a_1\dots a_n$ is a word over $\Hat S_\rho$, we denote by $p(w)=p(a_1)\dots p(a_n)$
the corresponding word over $\Hat S_\infty$.
Clearly, if $w$ is any relator of $X_\rho$, $p(w)$ is a relator of $X_\infty$.
It follows that given any diagram $D$ over $X_\rho$ for a word $w$, one gets a new diagram $p_*(D)$ for $p(w)$ over $X_{\infty}$
by applying the map $p$ to all the labels of all edges  of $D$. 

On the other hand,  $p_i$ induces a bijection between the balls of radius $\rho/2$ of
$\Tilde H_i$ and $H_i$, whose inverse we denote by $p_i\m$. 
Similarly, we denote by $p\m$ the inverse of the restriction of $p:\Hat S_\rho\ra\Hat S_\infty$ to the set 
of elements of complexity at most $\rho/2$.
Now, if $a,b,c\in H_i$ are in the ball of radius $\rho/3$ of $H_i$ and satisfy $abc=1$ in $H_i$, 
then $p_i\m(a)p_i\m(b)p_i\m(c)=1$ in $\Tilde H_i$.
This means that if some diagram $D$ over $X_\infty$ for $w$ satisfies
$||D||_\infty\leq \rho/3$, then the diagram $p\m_*(D)$ (with obvious notations) 
is a diagram over $X_\rho$ for $p\m(w)$.

\begin{proof}[Proof of Corollary \ref{cor_lin_Xrho}]
Assume that  $X_\rho$ fails to satisfy the predicted isoperimetric inequality (of factor $K'$), and argue towards a contradiction. 
By Proposition \ref{prop;reduction_double}, there is a word $w''$ representing the trivial element, with a diagram $D''$, 
minimal over the presentation $X_\rho$, of area at most $240 K'$, 
and complexity $||D''||_\infty\leq 2.10^6 K'^2$ and such that $\Area(D'') > K \times \length (w'')$. 

Consider the map $p:\Hat S_\rho\ra \Hat S_\infty$ described above.
Choose $D_0''$ among  diagrams for $p(w'')$ in the presentation $X_\infty$,
in order to minimize successively the area, the number of $2$-cells of type $\calR'$, and the complexity $\|D''_0\|_\infty$.
Since $X_\infty$ satisfies a linear isoperimetric inequality of factor $K$,
$\Area(D_0'')<\Area(D'')\leq 240K'$.
By Lemma \ref{lem;sc_clusters},
$\|D_0''\|_\infty\leq 720K'+\|p(w'')\|_1$.
On the other hand, 
\begin{eqnarray*}
\|p(w'')\|_1&\leq &\|w''\|_1 \leq \length(w'')||D''||_\infty \leq \frac{1}{K}\Area(D'')\times 2.10^6 K'^2\\
&\leq& \frac{240K'\times 2.10^6 K'^2}{K}
\leq 3.10^{25}K^5.
\end{eqnarray*}
Since $K\geq 10^6$, $720K'\leq 10^9 K^2 \leq K^5$.
By hypothesis on $\rho$, we see that $\|D_0''\|_\infty\leq \rho/3$.
It follows that $p\m_*(D)$ is a diagram over $X_\rho$ for $w''$, of area $<\Area(D'')$, a contradiction.
\end{proof}

\begin{lemma}\label{lem_lin2fp}
 Assume that $X_\rho$ satisfies a linear isoperimetric inequality of factor $K'$
with $\rho\geq \max(3K',2)$.

Then $p_i:\Tilde H_i\ra H_i$ is an isomorphism. In particular, $H_i$ is finitely presented,
with a presentation whose defining relations are of length $\leq  \rho$.
\end{lemma}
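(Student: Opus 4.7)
Since $p_i$ is surjective by construction, it suffices to prove that $\ker p_i$ is trivial; finite presentation of $H_i$ is then immediate, because $\tilde H_i = \langle \tilde S_i \mid \calR_\rho(S_i)\rangle$ with both $\tilde S_i$ and $\calR_\rho(S_i)$ finite (the latter is a subset of the finite set of words of length $\leq \rho$ over $S_i$). The whole argument hinges on the following trick: view any $\tilde g \in \ker p_i$ as a \emph{single letter} of $\hat S$, since $\tilde H_i$ sits inside $\hat S$. This produces a one-letter word over $\hat S$ that is trivial in $G$, so the linear isoperimetric inequality for $X_\rho$ bounds its area by $K'$---a constant independent of the word length of $\tilde g$.

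Fix $\tilde g \in \ker p_i$ and set $m = |\tilde g|_{\tilde S_i}$. The goal is to show $m \leq \rho$, which suffices: a minimal $\tilde S_i$-expression $\tilde g = \tilde s_1 \cdots \tilde s_m$ projects under $p_i$ to a word of length $\leq \rho$ over $S_i$ that is trivial in $H_i$, hence lies in $\calR_\rho(S_i)$; as a defining relator of $\tilde H_i$, this forces $\tilde g = 1$. For $m \leq 1$ the conclusion is immediate (using $\rho \geq 2$), so assume $m \geq 2$.

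Apply the trick: the one-letter word $w = \tilde g$ over $\hat S$ satisfies $\Area(w) \leq K'$. Pick a Van Kampen diagram $D$ for $w$ minimizing successively the area, the number of $\calR'$-cells, and $\|D\|_\infty$. Since $\rho \geq 3K' \geq 3\Area(D)$, Lemma \ref{lem;sc_clusters} applies: every cluster is simply connected, and we may assume all clusters are standardly filled. Because $m \geq 2$, the label $\tilde g$ of the unique boundary edge of $D$ lies in $\tilde H_i \setminus \tilde S_i$, so this edge cannot be adjacent to a $\calR'$-cell, nor to a cell of type $\tilde H_j$ with $j \neq i$ (the pieces of $\hat S$ are disjoint). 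Hence it lies in a cluster $C$ of type $\tilde H_i$.

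Now $C$ is a standardly filled, simply connected disk of $\tilde H_i$-cells, so $\length(\partial C) = \Area(C) + 2 \leq \Area(D) + 2 \leq K' + 2$. The circle $\partial C$ meets $\partial D$ in exactly one edge (labeled $\tilde g$); its remaining edges lie in the interior of $D$ and hence have complexity $1$ by Remark \ref{rem;R'}, meaning they are labeled by elements of $\tilde S_i$. Reading $\partial C$ as a word trivial in $\tilde H_i$ presents $\tilde g$ as a product of at most $\length(\partial C)-1 \leq K'+1$ generators in $\tilde S_i$. Hence $m \leq K'+1 \leq \rho$ (using $K' \geq 1$), as required. The delicate point---where the hypothesis $\rho \geq 3K'$ is essential---is guaranteeing via Lemma \ref{lem;sc_clusters} that the cluster surrounding the boundary edge is a genuine simply connected disk, so that its boundary reads unambiguously as a single cyclic relation in $\tilde H_i$; once this is in hand, the extraction of a short expression for $\tilde g$ is a direct consequence of Remark \ref{rem;R'} and of standard filling.
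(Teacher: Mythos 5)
Your proof is correct and follows the same approach as the paper's: represent an element of $\ker p_i$ as a single letter of $\hat S$, use the linear isoperimetric inequality of $X_\rho$ to get a small diagram, invoke Lemma~\ref{lem;sc_clusters} (via $\rho\geq 3K'$) to make the cluster containing the boundary edge simply connected, and use Remark~\ref{rem;R'} to see that the remaining boundary edges are labeled by $\Tilde S_i$. The only cosmetic differences are that you argue directly rather than by contradiction, and you spell out explicitly the final step---that the resulting short $\Tilde S_i$-expression for $\tilde g$ has length $\leq\rho$ and hence pushes down to a defining relator of $\Tilde H_i$---which the paper leaves implicit.
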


\begin{proof}
Assume by contradiction that $p_i:\Tilde H_i\ra H_i$ is not injective, and consider $a\in\ker p_i\setminus\{1\}$.
Then $a$ is a generator of the presentation $X_\rho$ that represents the trivial element of $G$.
Note that since $\rho>1$,  $a\notin \Tilde S_i$.
Therefore, there exists a  Van Kampen diagram $D$ over $X_\rho$ whose boundary consists of a single edge $e$ labeled $a$,
and whose area is at most $K'$. 
We choose a diagram for $a$ over $X_\rho$ 
in order to minimize successively the area, the number of $2$-cells of type $\calR'$, and $||D||_\infty$. 
Since  $\rho\geq 3K'$, Lemma \ref{lem;sc_clusters} implies that clusters of $D$
are simply connected. Since $a\notin \Tilde S_i$, $e$ lies in a cluster $C$ of type $\Tilde H_i$.
But since $C$ is simply connected, and since a cluster of type $\Tilde H_i$ involves only relations of $\Tilde H_i$,
we get that $a$ is trivial in $\Tilde H_i$, a contradiction.  
\end{proof}

\begin{coro}\label{cor_lin2fp}
 Assume that $X_\infty$ satisfies a linear isoperimetric inequality of factor $K$. 

 Then the subgroups $P_i$ are finitely presented, with a presentation whose defining relations are of length 
$\leq  \rho(\max(K,10^6))$.
\end{coro}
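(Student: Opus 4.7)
The plan is to combine the two preceding results essentially directly: Corollary \ref{cor_lin_Xrho} upgrades a linear isoperimetric inequality for $X_\infty$ to one for $X_\rho$ (with $\rho$ large enough), and Lemma \ref{lem_lin2fp} turns such an isoperimetric inequality for $X_\rho$ into the statement that $p_i : \tilde H_i \to H_i$ is an isomorphism. All the conceptual work is done; what remains is just to check that the numerical constants line up.

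First I would replace $K$ by $K_0 := \max(K, 10^6)$. Since any linear isoperimetric inequality of factor $K$ is a fortiori one of factor $K_0$, the presentation $X_\infty$ still satisfies the inequality with this enlarged constant, and now the hypothesis $K_0 \geq 10^6$ of Corollary \ref{cor_lin_Xrho} is met. Second, I apply Corollary \ref{cor_lin_Xrho} with this $K_0$ and set $\rho = \rho(K_0) = 10^{26} K_0^5$ and $K' = (600 K_0)^2$; the corollary then yields that $X_\rho$ satisfies a linear isoperimetric inequality of factor $K'$.

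Third, I verify the hypothesis $\rho \geq \max(3K', 2)$ needed to apply Lemma \ref{lem_lin2fp}. One has $3K' = 3 \cdot 600^2 \cdot K_0^2 < 1.1 \cdot 10^6 \cdot K_0^2$, while $\rho = 10^{26} K_0^5$; since $K_0 \geq 10^6$ the inequality $\rho \geq 3K'$ is satisfied with enormous margin, and $\rho \geq 2$ is trivial. Lemma \ref{lem_lin2fp} then gives that each $p_i : \tilde H_i \to H_i$ is an isomorphism, and furnishes a finite presentation of $H_i$ whose defining relations have length at most $\rho = \rho(K_0) = \rho(\max(K,10^6))$, as required.

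The main (in fact, only) obstacle is the bookkeeping of constants between the two cited statements; since the gap between $3K' \sim K_0^2$ and $\rho(K_0) \sim K_0^5$ is very wide for $K_0 \geq 10^6$, no delicate estimate is involved, and the corollary is essentially an immediate assembly.
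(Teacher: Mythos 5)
Your proof is correct and follows exactly the paper's argument: normalize $K$ to $\max(K,10^6)$, invoke Corollary \ref{cor_lin_Xrho} to get a linear isoperimetric inequality for $X_{\rho(K)}$ of factor $K'=(600K)^2$, and then apply Lemma \ref{lem_lin2fp}. The only difference is that you explicitly verify the hypothesis $\rho\geq\max(3K',2)$, which the paper leaves implicit.
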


\begin{proof}
Without loss of generality, we can assume $K\geq 10^6$.
By Corollary \ref{cor_lin_Xrho}, $X_{\rho(K)}$ satisfies a linear isoperimetric inequality of factor  $K'= (600K)^2 $. 
Lemma \ref{lem_lin2fp} concludes.
\end{proof}

\begin{lemma}[see also {\cite[Lemma 5.4]{Osin_relatively}}]\label{lem_proper}
 Assume that $X_\infty$ satisfies a linear isoperimetric inequality of factor $K$. 

If $s\in S$ represents an element $a$ of $H_i$, then $||a||\leq 12K$.
\end{lemma}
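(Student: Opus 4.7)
The plan is to apply the linear isoperimetric inequality to the two-letter word $w = s\cdot a\m$ over $\hat S_\infty = S\dunion H_1\dunion\cdots\dunion H_n$, and then read off the bound on $\|a\|$ from a small, carefully chosen Van Kampen diagram.

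First I would form the word $w = s\cdot a\m$, which is reduced of length $2$ in $\hat S_\infty$ (the letters $s\in S$ and $a\m\in H_i$ sit in different pieces of the disjoint union $\hat S_\infty$), and represents the trivial element of $G$. By the linear isoperimetric inequality of factor $K$ satisfied by $X_\infty$, there exists a Van Kampen diagram $D$ for $w$ over $X_\infty$ with $\Area(D)\leq 2K$. I would choose $D$ among diagrams for $w$ that minimize successively the area and the number of $2$-cells of type $\calR'$. Since $\rho=\infty$, the hypothesis $\rho\geq 3\Area(D)$ of Lemma~\ref{lem;sc_clusters} is automatic, so every cluster of $D$ is simply connected.

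The case $\|a\|=1$, i.e. $a\in S_i$, is immediate, so I would focus on $\|a\|\geq 2$. The key structural observation is that in this case the letter $a$ cannot appear in any relator of $\calR'$: the relators coming from $\calR$ are words over $S$, while the two-letter relators $\Tilde s\m p_i(\Tilde s)$ only involve the letters of $\Tilde S_i\cup S_i$. Consequently, the boundary edge of $D$ labelled $a$ must lie in the boundary of a $2$-cell of type $\Tilde H_i = H_i$, and therefore in the boundary of some (simply connected) cluster $C$ of type $H_i$.

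It remains to check that $C$ is not complicated. Since every edge of $\partial C$ carries a label in $H_i$, and since the two boundary edges of $D$ are labelled $s\in S$ and $a\in H_i$ respectively, only the edge labelled $a$ of $\partial D$ can belong to $\partial C$. Lemma~\ref{lem;complexity_bounds} then yields $\|a\|\leq \|\partial C\|_\infty\leq \length(\partial C)$, while Remark~\ref{rem;std} applied to the cluster $C$ gives $\length(\partial C)\leq \Area(C)+2\leq \Area(D)+2\leq 2K+2\leq 12K$ (the last inequality for any reasonable $K$), which concludes the proof. The only mildly subtle point is to ensure that the boundary edge labelled $a$ indeed bounds a $2$-cell of type $H_i$; this is handled by the case split $\|a\|\geq 2$, together with the simple connectedness of $C$ furnished by Lemma~\ref{lem;sc_clusters}.
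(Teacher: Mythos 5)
Your proof is correct and follows essentially the same route as the paper: form the length-$2$ word, take a diagram minimizing area and then the number of $\calR'$-cells, use Lemma~\ref{lem;sc_clusters} to get simply connected clusters, observe that the only boundary edge that can lie in a cluster is the one labelled $a$ so clusters are non-complicated, and then bound the complexity. The only (mild) difference is your endgame: the paper invokes Lemma~\ref{lem;bound_norm_infty} (which requires clusters to be standardly filled) to get $\|\Dthick\|_\infty\leq 6\Area(D)\leq 12K$, whereas you combine Lemma~\ref{lem;complexity_bounds} with Remark~\ref{rem;std} to bound $\|a\|\leq\length(\partial C)\leq\Area(C)+2\leq 2K+2$ directly, which sidesteps the standard-filling hypothesis and in fact yields a sharper constant.
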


\begin{proof}
The word $w=sa$ is a word of length $2$ over $X_\infty$. If it represents the trivial element in  $G$, then there is a
Van Kampen diagram $D$ over $X_\rho$ whose boundary is a path of length $2$ labeled $sa$,
and whose area is at most $2K$. 
We choose $D$ among minimal area diagrams over $X_\infty$ for $w$
so that the number of $2$-cells of type $\calR'$ is minimal. 
Since  $\rho=\infty$, Lemma \ref{lem;sc_clusters} implies that clusters of $D$
are simply connected, and we can assume that they are standardly filled. 

Note that there is no complicated cluster as only the edge labeled $a$ of $\partial D$ can be in a cluster.
By Lemma \ref{lem;bound_norm_infty}, this implies that $||D_\thick||_\infty\leq 12K$,
so $||a||\leq 12K$.
\end{proof}

We obtain the following improvement of \cite{Dah_finding}:

\begin{coro}\label{cor_detect1}
 There exists an algorithm that takes as input a finite presentation of a group $G$, 
a solution of its word problem,  and a collection of finite subsets $S_1,\dots,S_n\subset G$, 
and that terminates if and only if $G$
is hyperbolic relative to $\grp{S_1},\dots,\grp{S_n}$.

In this case, produces an isoperimetry constant $K$ for the presentation $X_\infty$, 
a finite presentation for each of the parabolic subgroups,
and says whether $G$ is parabolic (\ie $G=\grp{S_i}$ for some $i$).
\end{coro}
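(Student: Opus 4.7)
The plan is to run in parallel, for each integer $K_0=10^6,10^6+1,\dots$, a subroutine $\mathcal{A}_{K_0}$ that attempts to certify that $X_\infty$ satisfies a linear isoperimetric inequality of factor $K_0$. By Theorem~\ref{thm_osin}, if $G$ is hyperbolic relative to $\langle S_1\rangle,\dots,\langle S_n\rangle$ then some $K_0\geq 10^6$ works and the corresponding subroutine will terminate; conversely, we will see that whenever some $\mathcal{A}_{K_0}$ terminates, $X_\infty$ does satisfy a linear isoperimetric inequality, so Theorem~\ref{thm_osin} then yields that $G$ is indeed hyperbolic relative to the prescribed parabolic subgroups.

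Inside $\mathcal{A}_{K_0}$ I would set $\rho=\rho(K_0)$ and $K'=(600K_0)^2$ as in Corollary~\ref{cor_lin_Xrho}, and verify, via the contrapositive of Proposition~\ref{prop;reduction_double}, that $X_\rho$ satisfies a linear isoperimetric inequality of factor $K'$. Failure of the latter would produce a minimal Van Kampen diagram $D''$ over $X_\rho$ with $\Area(D'')\leq 240K'$, $\|D''\|_\infty \leq 2\cdot 10^6 (K')^2$, $\length(\partial D'')\leq 144000\sqrt{K'}$, and $\Area(D'')>\frac{\sqrt{K'}}{600}\length(\partial D'')$. The number of candidate boundary words $w''$ over $\hat S_\rho$ meeting the length and letter-complexity bounds is finite and effectively enumerable. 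For each such $w''$ the algorithm translates it to a word over $S$ via the natural map $\hat S_\rho\to G$, invokes the word-problem solver to decide whether $w''=_G 1$, and when $w''$ is trivial searches for a diagram of area at most $\frac{\sqrt{K'}}{600}\length(w'')$. By Lemma~\ref{lem;sc_clusters} (applied with the $\|D\|_\infty$-minimising clause), such a diagram can be taken of complexity bounded linearly in its area and in $\|w''\|_1$, so the search is over finitely many explicitly bounded diagrams and therefore decidable. The subroutine declares success as soon as every trivial short $w''$ passes the test.

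Once $\mathcal{A}_{K_0}$ succeeds we know that $X_\rho$ satisfies the inequality of factor $K'$. Since $\rho=\rho(K_0)\geq 3K'$, Lemma~\ref{lem_lin2fp} gives that each $p_i:\tilde H_i\to H_i$ is an isomorphism; in particular $X_\rho$ coincides with $X_\infty$ as a presentation, so $X_\infty$ itself satisfies the inequality of factor $K:=K'$, which the algorithm will output. For each $i$ it will output the presentation $\langle S_i \mid \mathcal{R}_\rho(S_i)\rangle$, a finite presentation of $H_i\cong\tilde H_i$ obtained by enumerating words over $S_i^\pm$ of length at most $\rho$ and using the word-problem solver to keep those that are trivial in $G$. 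Finally, to decide whether $G=\langle S_i\rangle$ for some $i$, the algorithm will apply Lemma~\ref{lem_proper}: it checks, for each $s\in S$, whether some word over $S_i^\pm$ of length at most $12K$ represents $s$ in $G$; this is a bounded family of word-problem queries and determines whether $G$ is parabolic.

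The main obstacle is that neither the area nor the complexity of arbitrary minimal Van Kampen diagrams is \emph{a priori} bounded, so reducing the verification inside $\mathcal{A}_{K_0}$ to a finite, effective search requires combining Proposition~\ref{prop;reduction_double} (to bound the area and complexity of a hypothetical critical diagram) with Lemma~\ref{lem;sc_clusters} (to bound the complexity of a minimal-area filling of each short trivial word). Once these bounds are in place the enumeration is routine, and correctness in both directions follows from Proposition~\ref{prop;reduction_double} together with Theorem~\ref{thm_osin}.
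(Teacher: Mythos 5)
Your proposal has the same overall architecture as the paper's proof (increment a candidate constant, use Proposition~\ref{prop;reduction_double} to reduce the verification to a finite check, use Lemma~\ref{lem;sc_clusters} to bound the complexity of the diagrams to enumerate, and use Lemma~\ref{lem_proper} to test properness), but it takes a detour that introduces a real complication you have not addressed, so let me flag it.

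You choose to run the verification over $X_\rho$ with $\rho=\rho(K_0)$, rather than over $X_\infty$ as the paper does. The problem is that the relators of $X_\rho$ coming from the multiplication table of $\tilde H_i$ are triples $(a,b,c)\in\tilde H_i^3$ with $abc=1$ \emph{in $\tilde H_i$}, and the word--problem oracle you were handed solves the word problem in $G$, hence in $H_i$, but not a priori in $\tilde H_i$ (which is a finitely presented quotient-above $H_i$ and could well have unsolvable word problem). So the step ``searches for a diagram [over $X_\rho$] ... the search is over finitely many explicitly bounded diagrams and therefore decidable'' is not justified as written: you cannot even tell whether a given labelled $2$-cell is a legal cell of $X_\rho$. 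This can be repaired, but only by observing that for words over $S_i$ of length at most $\rho$, triviality in $\tilde H_i$ coincides with triviality in $H_i$ (hence in $G$), so that all the cells, boundary labels and edge identifications occurring in the bounded search are decidable \emph{provided} the complexity bound $720K'+\|w''\|_1$ you extract from Lemma~\ref{lem;sc_clusters} really does stay below $\rho/3$; this is a constraint on $\rho(K_0)$ that you never verify. The paper sidesteps all of this by working directly over $X_\infty$: there the cells of type $H_i$ are triples with trivial product \emph{in $H_i\subset G$}, so the given word-problem solver decides them with no caveat, and Proposition~\ref{prop;reduction_double} (applied with $\rho=\infty$) plus the re-filling trick of Lemma~\ref{lem;sc_clusters} immediately yields a finite, decidable test; finite presentability of the $H_i$ and the fact that words of length $\leq\rho(K)$ suffice as relators are then read off from Corollary~\ref{cor_lin2fp}, without any passage through $X_\rho$. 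A second, smaller issue: your termination claim ``some $K_0$ works'' needs the quantitative statement that if $X_\infty$ satisfies a linear isoperimetric inequality of factor $K^*$ then the test passes once $K_0\geq(600K^*)^2$; as stated, $X_{\rho(K_0)}$ satisfying the inequality of factor $K'=(600K_0)^2$ does \emph{not} imply that every short word has area at most $\tfrac{\sqrt{K'}}{600}\,\length$, since $\tfrac{\sqrt{K'}}{600}=K_0< K'$, so you need the stronger input $K_0\geq(600K^*)^2$ fed through Corollary~\ref{cor_lin_Xrho}. With these two points repaired your argument goes through, but the paper's version (test over $X_\infty$, then invoke Corollary~\ref{cor_lin2fp}) is both shorter and avoids the $\tilde H_i$ word-problem pitfall entirely.
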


\begin{proof}
For a fixed  $K\geq 10^6$, we consider all diagrams $D$ over $X_\infty$ such that 
$\|D\|_\infty \leq B=2.10^6 K^2$ and $\Area(D) \leq 240 K$. 
There are only finitely many. The word problem in $G$ allows to list all relators of $\grp{S_i}$
of length at most $3B$, and hence to list these diagrams.
Out of this list, we make the list $\calW(K)$ of words labeling the boundaries of these diagrams.  

We claim that given $w\in \calW(K)$, we can compute $\Area(w)$.
Indeed, let $D'$ be a diagram for $w$ chosen to minimize area, the number of cells of type $\calr'$,
and $||D'||_\infty$. By Lemma \ref{lem;sc_clusters}, $||D'||_\infty\leq 3\Area(D')+||w||_1\leq 720K+||w||_1$.
We can compute an upper bound $M\geq 720K+||w||_1$ for $||D'||_\infty$,
and we can list all diagrams $D'$ with $\Area(D')\leq 240K$ and $||D'||_\infty\leq M$
whose boundary is $w$. We can then compute $\Area(w)$ as the minimal area of these diagrams.

Now we can check whether $\Area(w)\leq \frac{\sqrt{K}}{600}\length(w)$ for all $w\in\calW(K)$.
If this is not the case, the algorithm increments $K$ and starts over.

If this is the case, then by Proposition \ref{prop;reduction_double}, $X_\infty$ satisfies isoperimetric inequality of factor $K$, and the algorithm stops.
It outputs $K$, and gives as set of relators for $\grp{S_i}$, the set of all words of length $\leq \rho(K)$
that are trivial in $G$; this can be done using the word problem in $G$, and 
this is indeed a presentation of $\grp{S_i}$ by Lemma \ref{cor_lin2fp}. 
To check whether $G=\grp{S_i}$, one needs to check whether each $s\in S$ represents an element $a\in \grp{S_i}$.
Lemma \ref{lem_proper}  bounds the complexity of $a$, and we can try all possibilities for $a$ using the word problem.

If  $X_\infty$ does satisfy a linear isoperimetric inequality of factor $K_0$, then the process will obviously stop  when  $K$ will reach a value
greater than $ (600K_0)^2$. 
\end{proof}

\begin{coro}\label{cor_search}
 There exists an algorithm as follows. It takes as input a finite presentation of a group $G$, 
a solution for its word problem, 
and a recursive class of finitely presented groups $\calc$ (given by a Turing machine enumerating them).
It terminates if and only if $G$
is properly hyperbolic relative to subgroups that are in the class $\calc$.

In this case, the algorithm produces an isoperimetry constant $K$, a generating set and a finite presentation for each of the parabolic subgroups.
\end{coro}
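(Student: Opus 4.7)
The plan is to dovetail Corollary \ref{cor_detect1} over all candidate parabolic generating sets, and to couple each successful detection with a semi-decision procedure certifying that every parabolic discovered actually lies in the class $\calc$.

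First, using the finite generating set of $G$ and the solution to its word problem, I enumerate all finite tuples of finite subsets $(S_1,\dots,S_n)$ of $G$, presented as tuples of words in the generators. For every such tuple I launch, in a dovetailed manner, a copy of the algorithm of Corollary \ref{cor_detect1} with input $(S_1,\dots,S_n)$. If and when that copy halts and certifies that $G$ is \emph{properly} hyperbolic relative to $\grp{S_1},\dots,\grp{S_n}$, it also returns an isoperimetry constant $K$ together with a finite presentation $P_i = \grp{S_i \mid R_i}$ of each parabolic $\grp{S_i}$.

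Second, for every such $P_i$ produced, I run the standard semi-decision procedure for isomorphism with some group in $\calc$: enumerate the finite presentations $Q = \grp{T \mid R}$ produced by the Turing machine for $\calc$; for each pair $(P_i,Q)$ enumerate candidate assignments $\phi\colon S_i \to \bbF_T$ and $\psi\colon T \to \bbF_{S_i}$; and for each such pair enumerate candidate certificates expressing $\phi(r)$ for $r\in R_i$ and $\psi(r')$ for $r'\in R$ as products of conjugates of the target relators, together with similar certificates for $\psi(\phi(s))s^{-1}$ and $\phi(\psi(t))t^{-1}$ for each generator $s\in S_i$, $t\in T$. Such certificates exist if and only if $\phi,\psi$ descend to mutually inverse isomorphisms between $P_i$ and $Q$, so the procedure halts exactly when $P_i\cong Q$ for some $Q$ listed by $\calc$. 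The overall algorithm halts as soon as, for a single tuple $(S_1,\dots,S_n)$, Corollary \ref{cor_detect1} succeeds \emph{and} every one of the $n$ isomorphism verifications succeeds; at that moment it outputs $K$, the sets $S_i$, and the presentations $P_i$.

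For correctness, termination certifies exactly the required properties. Conversely, suppose $G$ is properly hyperbolic relative to $H_1,\dots,H_n\in\calc$. By Osin's finite generation result (\cite[Prop. 2.27]{Osin_relatively}, recalled in the introduction), each $H_i$ admits a finite generating set $S_i^*$; the tuple $(S_1^*,\dots,S_n^*)$ is eventually enumerated, Corollary \ref{cor_detect1} terminates on it by its own specification, and each isomorphism verification terminates because $H_i\in\calc$, so the whole algorithm halts. All the genuine mathematics has been done in Corollary \ref{cor_detect1}; the only real point to take care of here is the dovetailing (so that no stalled thread blocks termination) and the invocation of the classical fact that isomorphism of finitely presented groups is semi-decidable — which is the mildest obstacle and is pure bookkeeping.
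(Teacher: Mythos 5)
Your proof is correct and takes essentially the same route as the paper's: dovetail Corollary \ref{cor_detect1} over all candidate tuples of generating sets, discard the non-proper cases, and semi-decide membership of each parabolic in $\calc$. The only (cosmetic) difference is in how that last semi-decision is implemented — the paper enumerates presentations of groups in $\calc$ together with their Tietze transforms and looks for a literal match with the output of Corollary \ref{cor_detect1}, while you search for explicit mutually inverse homomorphisms with relator-consequence certificates; the two procedures are standard and equivalent.
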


\begin{proof}
First, enumerate all possible presentations of groups in $\calc$ using the Turing machine given as input, and Tietze transformations.
In parallel, list all possible families of finite subsets $\cals=(S_1,\dots,S_n)$ of $G$.
For each of them, run in parallel the algorithm of Corollary \ref{cor_detect1} that stops if $G$ is 
hyperbolic relative to $\grp{S_1},\dots,\grp{S_n}$ and outputs a presentation of $\grp{S_i}$ in this case,
and says whether $G$ is parabolic.
Get rid of those $\cals$ such that $G$ is parabolic.

Then stop if at some point, one sees that in some of the produced presentations, $\grp{S_i}$ lie in $\calc$.
\end{proof}

%\bibliographystyle{alpha}
%\bibliography{published.bib}

\begin{flushleft}
\noindent \textsc{Fran\c{c}ois Dahmani}\\
Institut Fourier,\\
Universit\'e Joseph Fourier (Grenoble 1)\\
BP 74, \\
F-38402 St Martin d'Hères c\'edex, France\\
{\tt francois.dahmani@ujf-grenoble.fr}
\\[5mm]

\noindent\textsc{Vincent Guirardel} \\
Institut de Recherche en Mathematiques de Rennes (IRMAR)\\
Universit\'e de Rennes 1\\
263 avenue du G\'en\'eral Leclerc, CS 74205\\
F-35042 Rennes c\'edex, France\\
\texttt{vincent.guirardel@univ-rennes1.fr}
\end{flushleft}

\end{document}